\let\oldmarginpar\marginpar
\renewcommand\marginpar[1]{\-\oldmarginpar[\raggedleft\footnotesize #1]%
{\raggedright\footnotesize #1}}
\theoremstyle{plain}
\newtheorem{thm}{Theorem}[section]
\newtheorem{conj}[thm]{Conjecture}
\newtheorem{lemma}[thm]{Lemma}
\newtheorem{prop}[thm]{Proposition}
\newtheorem{cor}[thm]{Corollary}
\theoremstyle{definition}
\newtheorem{definition}[thm]{Definition}
\newtheorem{example}[thm]{Example}
\newtheorem{remark}[thm]{Remark}
\newtheorem{construction}[thm]{Construction}
\theoremstyle{remark}
\numberwithin{equation}{section}
\newcommand{\N}{\mathbb{N}}
\newcommand{\Z}{\mathbb{Z}}
\newcommand{\R}{\mathbb{R}}
\newcommand{\C}{\mathbb{C}}
\renewcommand{\a}{\alpha}
\newcommand{\La}{\Lambda}
\newcommand{\la}{\lambda}
\newcommand{\p}{\varphi}
\newcommand{\e}{\varepsilon}
\newcommand{\dd}{\partial}
\newcommand{\sse}{\subseteq}
\newcommand{\lr}{\longrightarrow}
\newcommand{\im}{\operatorname{im}}
\newcommand{\id}{\operatorname{id}}
\newcommand{\ob}{\operatorname{ob}}
\newcommand{\Diff}{\operatorname{Diff}}
\newcommand{\Symp}{\operatorname{Symp}}
\newcommand{\Sk}{\text{Sk}}
\newcommand{\st}{\text{st}}
\newcommand{\std}{\text{st}}
\def\Op{{\mathcal O}{\it p}\,}
\begin{document}
\begin{abstract}
In this article we show that in any dimension there exist infinitely many pairs of formally contact isotopic isocontact embeddings into the standard contact sphere which are not contact isotopic. This is the first example of rigidity for contact submanifolds in higher dimensions. The contact embeddings are constructed via contact push-offs of higher-dimensional Legendrian submanifolds, a construction that generalizes the union of the positive and negative transverse push-offs of a Legendrian knot to higher dimensions. 
\end{abstract}

\title[Non-isotopic contact push-offs]{Non-simplicity of isocontact embeddings\\ in all higher dimensions}
\subjclass[2010]{Primary: 53D10. Secondary: 53D15, 57R17.}

\author{Roger Casals}
\address{University of California Davis, Dept. of Mathematics, Shields Avenue, Davis, CA 95616, USA}
\email{casals@math.ucdavis.edu}

\author{John B.~Etnyre}
\address{School of Mathematics, Georgia Institute
of Technology, 686 Cherry Street,  Atlanta, GA 30332-0160, USA}
\email{etnyre@math.gatech.edu}

\maketitle
\vspace{-0.6cm}
\section{Introduction}\label{sec:intro}

Transverse knots in contact 3--manifolds \cite{Etnyre05, Geiges08} have proven themselves instrumental to study the contact geometry of 3--manifolds. The first existence results for 3-dimensional contact structures \cite{Lutz71, Martinet71} crucially used transverse knots, and they were responsible for the birth of the tight and overtwisted dichotomy \cite{Bennequin83, Eliashberg89}. In fact, the collection of transverse representatives of fibered knots with pseudo-Anosov monodromy in a contact 3--manifold uniquely determines the contact structure up to contact isotopy \cite{EtnyreVanHorn-Morris11}.

The first instance of formally contact isotopic transverse knots, and thus smoothly isotopic knots, which were not contact isotopic, was discovered by J.S.~Birman and W.~Menasco \cite[Theorem 3]{BirmanMenasco06II}. This continued with \cite[Theorem 1.7]{EtnyreHonda05} where it was shown that the $(2,3)$-cable of the $(2,3)$-torus knot is not transversely simple. These results established the rigidity of transverse knots in 3-dimensional contact manifolds \cite{Eliashberg15}. Techniques and results on transverse knots have since developed \cite{EkholmEtnyreNgSullivan13a, Ng11, OzsvathSzaboThurston08, Plamenevskaya06} 
yielding a wide range of applications and classifications results in low-dimensional contact geometry \cite{EtnyreLafountainTosun12, EtnyreNgVertesi13, Plamenevskaya06a}.


In parallel, the theory of contact submanifolds in higher dimensions has a central role in the development of higher-dimensional contact and symplectic topology \cite{BormanEliashbergMurphy,CasalsMurphyPresas,Giroux02}. In the symplectic context, the classification of symplectic surfaces in $(\mathbb{C}\mathbb{P}^2,\omega_{\st})$ is an outstanding open problem, known as the symplectic isotopy problem \cite{AurouxSmith,Gromov85}. In addition, the first constructions of contact and symplectic submanifolds employed approximately holomorphic techniques \cite{DonaldsonAHT,IMP}. Remarkably, the study of isocontact embeddings in higher dimensions has seen recent developments, including \cite{EtnyreFurukawa17, EtnyreLekili18} for new results on isocontact embeddings into 5-dimensional contact manifolds, \cite{Lazarev,PancholiPandit} for existence results on codimension-2 contact embeddings, and \cite{BormanEliashbergMurphy15, CasalsMurphyPresas} for the study of overtwisted isocontact embeddings. Note that the Isotopy Extension Theorem \cite[Theorem 2.6.12]{Geiges08} holds for contact submanifolds in all dimensions, and it is thus equivalent to consider isotopies of isocontact embeddings and ambient contact isotopies.

The central question that has remained open, in dimension 5 and above, is the existence of (formally contact isotopic) isocontact embeddings which are not contact isotopic. In particular, there was no known higher-dimensional counterpart to the well-known rigidity for transverse knots \cite{Bennequin83,BirmanMenasco06II,Eliashberg15} in contact 3-manifolds. The aim of this article is to positively answer this question in all higher dimensions, thus establishing the rigidity of contact submanifolds in all dimensions. The first result we present is:

\begin{thm}\label{thm:main}
Let $(S^{2n+1},\xi_\std)$ be the standard contact sphere, and $(Y,\xi)$ the contact boundary of $(T^*S^n,\la_\st)$. Then there are two codimension-2 isocontact embeddings
$$i_0,i_1:(Y,\xi)\lr(S^{2n+1},\xi_\std),$$
such that $i_0$ and $i_1$ are formally contact isotopic and $i_0$ and $i_1$ are not contact isotopic.
\end{thm}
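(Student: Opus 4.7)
The plan is to realize $i_0$ and $i_1$ as the \emph{contact push-offs} of two carefully chosen Legendrian $n$-spheres, and then to translate contact (non-)isotopy of the push-offs into Legendrian (non-)isotopy of the original \Legs. Concretely, for any Legendrian $\La\cong S^n$ in $(S^{2n+1}, \xi_\std)$ the standard Legendrian neighborhood theorem gives a contactomorphism of a neighborhood of $\La$ with a neighborhood of the zero section in $(J^1(S^n), dz - p\,dq)$. The codimension-$2$ submanifold $\{|p|=\e,\ z=0\} \sse J^1(S^n)$ is contactomorphic to $(\partial T^*S^n, \xi_\st) = (Y,\xi)$, and thus defines an isocontact embedding $i_\La : (Y,\xi) \lr (S^{2n+1},\xi_\std)$. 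I would then pick two Legendrian spheres $\La_0, \La_1$ in $(S^{2n+1},\xi_\std)$ that are formally Legendrian isotopic but not Legendrian isotopic --- such pairs exist in every dimension $n \ges 2$, e.g.\ via constructions of Ekholm--Etnyre--Sullivan distinguished by linearized Legendrian contact homology --- and set $i_k := i_{\La_k}$.

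A formal Legendrian isotopy between $\La_0$ and $\La_1$ consists of a smooth isotopy together with a homotopy of the pointwise formal Legendrian data. Since the push-off construction depends only on the germ of the ambient contact structure along $\La$, such a formal Legendrian isotopy naturally induces a formal isotopy of the standard contact neighborhoods, and hence a formal contact isotopy between $i_0$ and $i_1$. This handles the ``formally isotopic'' part of the statement.

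The crux of the proof is to show that $i_0$ and $i_1$ are not genuinely contact isotopic. Suppose for contradiction that there is an ambient contact isotopy $\phi_t$ of $(S^{2n+1},\xi_\std)$ with $\phi_1 \circ i_0 = i_1$. By the uniqueness of tubular contact neighborhoods of a contact submanifold, $\phi_1$ extends to a contactomorphism $N_0 \to N_1$ between standard neighborhoods of the push-offs, each contactomorphic to a neighborhood of $\{|p|=\e,\ z=0\}$ in $J^1(S^n)$. The region $\{|p| \les \e\}$ inside $N_k$ is a Weinstein filling of $Y$ whose Lagrangian skeleton is precisely $\La_k$. Provided $\phi_1$ sends the ``$\La$-side'' of $N_0$ to that of $N_1$, both $\phi_1(\La_0)$ and $\La_1$ sit inside the same Weinstein filling as Lagrangian skeleta, and a standard Liouville-flow argument provides a Legendrian isotopy between them inside $N_1$. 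Concatenating with $\phi_t$ produces a Legendrian isotopy from $\La_0$ to $\La_1$ in $(S^{2n+1},\xi_\std)$, contradicting the choice of the pair.

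The principal obstacle is the ``side-preservation'' step: one must show that the two sides of the push-off $Y \sse S^{2n+1}$ are intrinsically distinguishable, so that no ambient contact isotopy can swap them. I would approach this via invariants of the two sides themselves, for instance by comparing symplectic cohomologies of the Liouville completions of the ``small'' (Weinstein) side and of the ``large'' complementary side, or by using Legendrian contact homology of auxiliary \Legs sitting in the complement. Once this side-recognition is in place, the implication from contact isotopy of push-offs to Legendrian isotopy of their cores runs as above, and the theorem follows from the non-Legendrian-isotopic choice of $\La_0, \La_1$.
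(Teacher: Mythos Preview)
Your setup via contact push-offs and the formal-isotopy step agree with the paper's approach (this is exactly Lemma~\ref{lem:formaldetermined}). The non-isotopy argument, however, has a genuine gap.

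What you are attempting in the last two paragraphs is precisely the implication ``$\tau(\La_0)$ contact isotopic to $\tau(\La_1)$ $\Rightarrow$ $\La_0$ Legendrian isotopic to $\La_1$''. The paper states this as an \emph{open conjecture} (Conjecture~\ref{conj:ribbon}) rather than proving it, and your sketch does not close it. Beyond the side-preservation issue you flag, the step ``both $\phi_1(\La_0)$ and $\La_1$ sit inside the same Weinstein filling as Lagrangian skeleta, and a standard Liouville-flow argument provides a Legendrian isotopy'' does not go through: the ambient contactomorphism $\phi_1$ only preserves the contact form up to a conformal factor, so the restricted Liouville form on the hypersurface $\phi_1(D^*\La_0)$ is \emph{not} the one whose skeleton is $\La_1$. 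Different Liouville forms on the same exact symplectic manifold can have different skeleta, and there is no general mechanism producing a Legendrian isotopy between them in the ambient contact manifold. So the contradiction you aim for never materializes.

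The paper bypasses this entirely. It chooses the Legendrians very specifically --- $\La_0$ the standard unknot and $\La_1=s(\La_0)$ its stabilization --- and then distinguishes the push-offs by an \emph{extrinsic} contact invariant: the contactomorphism type of the $2$-fold contact branched cover of $(S^{2n+1},\xi_\std)$ along $\tau(\La_k)$. Theorem~\ref{thm:contactsurgery} gives an explicit Weinstein filling of this branched cover as a single critical handle attached along $\La_k\#\La_k$. For $k=1$ the attaching sphere is loose, so the filling is flexible; for $k=0$ the branched cover is the Brieskorn sphere $\ob(T^*S^n,\tau^2)$, and the combination of the symplectic homology computation of Kwon--van~Koert with Zhou's theorem shows it admits no flexible Weinstein filling. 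Hence the two branched covers are not contactomorphic, so $i_0$ and $i_1$ cannot be contact isotopic. This argument never needs to recover the Legendrian from its push-off.
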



Let us refer to an isotopy class of smooth embeddings as {\em simple} if any two isocontact embeddings in this smooth isotopy class, which are formally contact isotopic, are isotopic through isocontact embeddings. In this language, Theorem \ref{thm:main} is the first example of contact non-simplicity in higher dimensions. The three-dimensional case was first resolved by J.S. Birman and W. Menasco in \cite[Theorem 3]{BirmanMenasco06II} and \cite[Theorem 1.7]{EtnyreHonda05}.

\begin{remark}
The h-principle for isocontact embeddings \cite[Theorem 12.3.1]{EliashbergMishachev02} shows that formally isotopic contact embeddings in at least codimension-4 must be contact isotopic. In this sense, Theorem \ref{thm:main} is sharp and the codimension-2 condition cannot be removed.\hfill$\Box$
\end{remark}

The isocontact embeddings in Theorem~\ref{thm:main} are obtained through a construction which we refer to as the contact push-off of a Legendrian submanifold, defined in Section~\ref{sec:pushoff}. This technique yields a productive connection between the theory of higher-dimensional Legendrian submanifolds \cite{CasalsMurphy, EkholmEtnyreSullivan05b, EkholmEtnyreSullivan05a} and the study of isocontact embeddings.



Theorem~\ref{thm:main} is proven in three steps. First, we construct the isocontact embeddings $i_0$ and $i_1$ using the contact push-off of Legendrian submanifolds. Second, we develop higher-dimensional contact surgery diagrams for the 2-fold contact branched covers of $(S^{2n+1},\xi_\std)$ branched along the images of these isocontact embeddings. Finally, the theory of pseudo-holomorphic curves, in the context of positive $S^1$-equivariant symplectic homology \cite[Section 2]{BourgeoisOancea} and \cite{KwonVanKoert16,Zhou}, allows us to distinguish the higher-dimensional contact structures given by these contact surgery diagrams.

\begin{remark}
 Theorem~\ref{thm:contactsurgery} proven in the course of the proof of Theorem \ref{thm:main} and Section~\ref{sec:covers} are of independent interest for the study of higher-dimensional Legendrian handlebody decompositions and contact surgeries, since they provide an explicit computational mechanism to obtain contact surgery diagrams for cyclic contact branch covers along contact push-offs in all higher-dimensions.\hfill$\Box$
 \end{remark}

In addition to Theorem~\ref{thm:main}, we use the contact push-offs of non-isotopic higher-dimensional singular Legendrian submanifolds in order to prove the following:

\begin{thm}\label{cor:infinitelymany}
Let $(S^{2n+1},\xi_\std)$ be the standard contact sphere. There exist infinitely many non-simple classes of isocontact embeddings in $(S^{2n+1},\xi_\std)$.
\end{thm}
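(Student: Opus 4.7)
The strategy is to promote the three-step proof of Theorem~\ref{thm:main} to an infinite family of singular Legendrian contact push-offs, producing one non-simple smooth isotopy class of isocontact embeddings per family member. For each positive integer $k$, I would first construct a pair of singular Legendrian submanifolds $\La_0^k,\La_1^k \sse (S^{2n+1},\xi_\std)$ sharing the same formal Legendrian data but not isotopic through singular Legendrian isotopies, for instance by adjoining Legendrian handles whose attaching data is distinguished by a Legendrian invariant such as Legendrian contact homology, or by varying the local singularity models in a way that preserves the formal class. Their contact push-offs, as defined in Section~\ref{sec:pushoff}, then yield a pair of codimension-2 isocontact embeddings $i_0^k,i_1^k\colon (Y_k,\xi_k) \lr (S^{2n+1},\xi_\std)$ that are formally contact isotopic by the same formal argument used in Theorem~\ref{thm:main}. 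Varying $k$ is arranged to change the smooth topology or the smooth embedding class of the push-off, guaranteeing that the resulting non-simple pairs lie in distinct smooth isotopy classes of contact submanifolds.

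For the rigidity step within each family member, I would apply Theorem~\ref{thm:contactsurgery} together with the machinery of Section~\ref{sec:covers} to each pair $(i_0^k,i_1^k)$ to obtain explicit higher-dimensional contact surgery diagrams for the 2-fold contact branched covers of $(S^{2n+1},\xi_\std)$ along their images. Positive $S^1$-equivariant symplectic homology is then computed from these surgery diagrams, exactly as in the proof of Theorem~\ref{thm:main}, and used to exhibit a contactomorphism invariant that separates the two branched covers for each $k$. Since a contact isotopy between $i_0^k$ and $i_1^k$ would induce a contactomorphism of the associated branched covers, this suffices to show that each $k$ contributes a non-simple class, and the non-simplicity of class $k$ is detected independently of the choices made for $k'\neq k$.

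The main obstacle is producing a bona fide infinite sequence of distinct non-simple smooth isotopy classes rather than infinitely many representatives collapsing onto finitely many classes. Concretely, the singular Legendrians $\La_i^k$ must be chosen so that the push-offs $(Y_k,\xi_k)$ are smoothly distinguishable as embeddings in $S^{2n+1}$ for distinct $k$; the most natural route is to let the smooth topology of $Y_k$ itself change with $k$, e.g.\ by taking push-offs of singular Legendrians whose regular strata have pairwise non-diffeomorphic resolutions, or by using embedding invariants detected at the singular strata. A secondary technical obstacle is verifying that positive $S^1$-equivariant symplectic homology remains sharp enough in each case to separate $i_0^k$ from $i_1^k$; this amounts to an infinite family of Reeb chord computations on the handlebodies produced by Theorem~\ref{thm:contactsurgery}, generalizing but not fundamentally departing from the single computation that underlies Theorem~\ref{thm:main}.
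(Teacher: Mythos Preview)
Your outline follows the right template but misses the specific mechanism that makes the argument work. In the paper the construction is concrete: $\La_0^k$ is the Legendrian lift of the skeleton of the $A_k$ Milnor fiber $(A^{2n}_k,\la_\std)$, realized as a page of the open book $\ob(A^{2n}_k,\tau_{A_k})\cong(S^{2n+1},\xi_\std)$, and $\La_1^k=s(\La_0^k)$ is its \emph{stabilization}. The dichotomy separating the two branched covers is not a side-by-side computation of $SH^{S^1,+}$ as you suggest. Rather, stabilizing every top stratum of the skeleton makes each critical attaching sphere in the surgery description of $C_2(\tau(\La_1^k))$ \emph{loose}, so that cover admits a \emph{flexible} Weinstein filling and hence every Weinstein filling of it has vanishing symplectic homology; meanwhile $C_2(\tau(\La_0^k))$ is a Brieskorn manifold whose standard filling has non-zero $SH$. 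Your proposed ways of producing $\La_1^k$---distinguishing it from $\La_0^k$ by Legendrian contact homology or by perturbing local singularity models---do not manufacture looseness on one side, and so do not plug into this flexible/non-flexible obstruction. You would then be forced to compute $SH$ of both covers from their surgery presentations directly, which the paper never does and which is a substantially harder problem.

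A secondary gap concerns distinguishing the classes for different $k$. Letting the smooth topology of $Y_k=\partial A^{2n}_k$ vary only settles the case $n$ even (via Betti numbers); for $n$ odd these boundaries can be diffeomorphic across distinct $k$, and the paper must invoke contact invariants of the \emph{domain} itself---the mean Euler characteristic for $k$ odd and full positive symplectic homology for $k$ even---to separate them. Note also that Theorem~\ref{thm:contactsurgery} as stated is for smooth Legendrian spheres; for the singular $A_k$ skeleta one must use the more general handle-by-handle cobordism description of Section~\ref{sec:covers} (Construction~\ref{con} and Corollary~\ref{cor:flexiblebranch}) rather than invoking Theorem~\ref{thm:contactsurgery} verbatim.
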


In addition, should we assume the Legendrian surgery formula \cite[Theorem 5.4]{BourgeoisEkholmEliashberg12} and the well-definedness of the terms therein, the proof of Theorem \ref{thm:main} should yield infinitely many formally isotopic contact embeddings of a given contact manifold into $(S^{2n+1},\xi_\std)$ which are not contact isotopic. Thus, we state it as the following:

\begin{conj}\label{conj:infinite}
Let $(S^{2n+1},\xi_\std)$ be the standard contact sphere. There exist infinitely many formally contact isotopic isocontact embeddings of $(\dd T^*S^n,\xi_\std)$ into $(S^{2n+1},\xi_\std)$ which are not contact isotopic.\hfill$\Box$
\end{conj}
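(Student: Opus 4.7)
The plan is to upgrade the two-element construction of Theorem~\ref{thm:main} to an infinite family by replacing the pair of Legendrian spheres used there with an infinite family of Legendrian embeddings of $S^n$ that are pairwise formally Legendrian isotopic but pairwise not Legendrian isotopic. Such families can be extracted from higher-dimensional Chekanov-type constructions (e.g.\ front-spinning, satellite, or Legendrian connect-sum modifications), arranged so that all members have the same classical invariants (rotation and Thurston--Bennequin data) and the same formal Legendrian isotopy class, but pairwise inequivalent Legendrian contact homology DGAs, distinguished for instance by the geometry of their augmentation varieties or by their linearized contact (co)homology.

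First, I would fix such a family $\{L_k\}_{k\in\N}$ of Legendrian $n$-spheres in $(S^{2n+1},\xi_\std)$. Second, I would apply the contact push-off construction of Section~\ref{sec:pushoff} to each $L_k$, obtaining isocontact embeddings $i_k:(\dd T^*S^n,\xi_\std)\lr(S^{2n+1},\xi_\std)$. By the same formal argument used in the proof of Theorem~\ref{thm:main}, formal Legendrian isotopy of the $L_k$ passes through the push-off construction to give formal contact isotopies between any $i_j$ and $i_k$, so the entire family lies in a single formal contact isotopy class.

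Third, to separate the $i_k$ as isocontact embeddings, I would pass to the $2$-fold contact branched cover $X_k$ of $(S^{2n+1},\xi_\std)$ along $\im i_k$. By Theorem~\ref{thm:contactsurgery}, each $X_k$ admits an explicit contact surgery diagram whose attaching data are built from two linked copies of $L_k$. Under the assumed Legendrian surgery formula of Bourgeois--Ekholm--Eliashberg, the positive $S^1$-equivariant symplectic homology of the Weinstein filling of $X_k$ is computable from the cyclic bar complex of the DGA of $L_k$. Choosing the $L_k$ so that these DGA invariants are pairwise non-quasi-equivalent in a manner that propagates through the surgery formula, I would obtain pairwise distinct values of $\sh^{S^1,+}$, which implies the $X_k$ are pairwise non-contactomorphic and hence the $i_k$ are pairwise not contact isotopic.

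The main obstacle is precisely the hypothesis flagged in the conjecture: ensuring that the Legendrian surgery formula is rigorously well-defined and applicable in the branched-cover setup, including the compatibility of augmentation-theoretic or cyclic-bar data under the relevant grading and orientation conventions. A secondary obstacle is concrete: producing the family $\{L_k\}$ with enough arithmetic control on their Legendrian invariants so that the resulting $S^1$-equivariant symplectic homologies of the $X_k$ can be shown to be pairwise distinct, rather than merely pairwise not obviously identifiable. Once both ingredients are in place, the argument follows the template of Theorem~\ref{thm:main} essentially line by line.
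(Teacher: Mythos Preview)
This statement is labelled a \emph{conjecture} in the paper, and the paper does not supply a proof. The sentence immediately preceding Conjecture~\ref{conj:infinite} is the only argument offered: ``should we assume the Legendrian surgery formula \cite[Theorem~5.4]{BourgeoisEkholmEliashberg12} and the well-definedness of the terms therein, the proof of Theorem~\ref{thm:main} should yield infinitely many\dots''. So there is no paper proof to compare against; your proposal is a fleshing-out of exactly the heuristic the authors gesture at, and you correctly flag the surgery formula as the conditional ingredient.

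Two remarks on the details of your sketch. First, a minor inaccuracy: Theorem~\ref{thm:contactsurgery} does not give the attaching data of the branched cover as ``two linked copies of $L_k$'' but rather as the single Legendrian sphere $L_k\# L_k$ (the Legendrian connected sum of two unlinked copies, cf.\ Subsection~\ref{sssec:legsum}). This is what the surgery formula would be applied to. Second, and more substantively, computing pairwise distinct values of $\sh^{S^1,+}$ for the \emph{fillings} of the $X_k$ is not by itself enough: you must argue that this quantity is an invariant of the contact boundary, not merely of the filling. In the paper's proof of Theorem~\ref{cor:infinitelymany} this is handled via \cite[Theorem~20]{Zhou} (for the flexible/non-flexible dichotomy) and via \cite[Lemma~3.2]{Uebele} under a $c_1=0$ hypothesis; your sketch would need an analogous step, which is a genuine additional obstacle on top of the surgery formula itself.
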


{\bf Organization.} The article is organized as follows. We begin with some background material in Section~\ref{sec:background}. Section~\ref{sec:pushoff} gives a method to construct codimension-2 contact embeddings into a given contact manifold via higher-dimensional Legendrian submanifolds. Section \ref{sec:covers} studies contact surgery diagrams for the contact cyclic branched covers along these contact submanifolds. Then Section~\ref{sec:mainproof} proves Theorem~\ref{thm:main}, and Section~\ref{sec:infinte} proves Theorem~\ref{cor:infinitelymany}.\\

{\bf Acknowledgements.} We are thankful to the referee for their detailed report. We are also grateful to them for suggesting the current argument for Lemma \ref{lem:formaldetermined}. We are grateful to Jo Nelson and Jeremy Van Horn Morris for useful discussions. R.~Casals is supported by the NSF grant DMS-1841913 and a BBVA Research Fellowship. J.~Etnyre is partially supported by the NSF grant DMS-1608684.\hfill$\Box$
\section{Preliminaries}\label{sec:background}

In this section we introduce the preliminaries required for this article. The reader is referred to \cite{ArnoldGivental01, Geiges08} for a good introduction to the basics of contact topology, and \cite{Etnyre05} for the theory of 3-dimensional transverse knots.

\subsection{Formal isotopies}\label{ssec:formal}

The condition for a smooth submanifold to be a Legendrian or a contact submanifold of a given contact manifold $(Y,\xi)$ can be sifted into two pieces, algebraic topology and differential geometry. This is the tenet of the h-principle \cite{EliashbergMishachev02, Gromov86} and allows us to focus on the genuinely geometric, instead of homotopical, features of contact topology.

For instance, the fact that two smoothly isotopic Legendrian knots in $(S^3,\xi_\st)$ with different Thurston-Bennequin invariants \cite[Section 2.6.1]{Etnyre05} are not Legendrian isotopic can be detected with classical homotopy theory, and we thus consider it an algebraic topological matter, rather than differential geometric.

The notion of a formal Legendrian submanifold is formalized as follows. 

\begin{definition}[\cite{EliashbergMishachev02,Murphy??}]\label{def:formalLeg}
Let $(Y,\xi)$ be a $(2n+1)$-dimensional contact manifold and $\La$ an $n$-dimensional smooth manifold. A formal Legendrian embedding of $\La$ into $(Y,\xi)$ is the data of the following commutative diagram
\[ \begin{tikzcd}
T\La \arrow{r}{F_s} \arrow[swap]{d}{\pi} & TY \arrow{d}{\pi} \\%
\La \arrow{r}{f}& Y
\end{tikzcd}
\]
where $s\in[0,1]$, the map $\pi$ is the canonical projection of the smooth tangent bundle, and the pair $(f,F_s)$ satisfies the following two properties:
\begin{itemize}
 \item[1.] $f$ is a smooth embedding and $F_0=df$,  
\item[2.] $F_s$ is a fiberwise injective bundle map covering $f$ for all $s\in[0,1]$, and
 \item[3.] $F_1(T_p\La)$ is a Lagrangian subspace of $\xi_{f(p)}\sse TY$, for all $p\in\La$. \hfill$\Box$
\end{itemize}
\end{definition}

For instance, in the case of $(Y,\xi)=(\R^3,\xi_\std)$, the set of connected components in the space of formal Legendrian knots is indexed by the smooth topological type of the knot, the rotation number, and the Thurston-Bennequin invariant \cite{Etnyre05}. This result and the higher-dimensional analogues for stably parallelizable Legendrians in $(Y,\xi)=(\R^{2n+1},\xi_\std)$ are discussed in \cite[Proposition~A.2]{Murphy??}.

The formal avatar of a codimension-2 contact submanifold is described as follows.

\begin{definition}[\cite{EliashbergMishachev02}]\label{def:formalcont}
Let $M$ be a $(2n-1)$-dimensional smooth manifold, a pair $(\eta,\omega)$ is said to be a formal (or almost) contact structure on $M$ if $\eta\sse TM$ is a codimension-1 sub-bundle and $\omega\in\Omega^2(M)$ is a $2$-form such that the conformal class of its restriction $\omega|_\eta$ to the sub-bundle $\eta\sse TM$ induces a conformal symplectic structure $(\eta,\omega)$ on the sub-bundle $\eta$.

Let $(Y,\xi,\tau)$ be a $(2n+1)$-dimensional formal contact manifold, $(M,\eta,\omega)$ a $(2n-1)$-dimensional formal contact manifold. A formal (iso)contact embedding of $(M,\eta,\omega)$ into $(Y,\xi)$ is the data of the following commutative diagram
\[ \begin{tikzcd}
TM \arrow{r}{F_s} \arrow[swap]{d}{\pi} & TY \arrow{d}{\pi} \\%
M \arrow{r}{f}& Y
\end{tikzcd}
\]
where $s\in[0,1]$ and the pair $(f,F_s)$ satisfies the following two properties:
\begin{itemize}
 \item[1.] $f$ is a smooth embedding and $F_0=df$,  
\item[2.] $F_s$ is a fiberwise injective bundle map covering $f$ for all $s\in[0,1]$, and
 \item[3.] $F_1$ induces a conformally symplectic map from $(\eta,\omega)$ into $(\xi,\tau)$.\hfill$\Box$
\end{itemize}
\end{definition}

In the case that $(Y,\xi)$ is a contact manifold, we shall always consider $\tau=d\alpha$, where the $1$-form $\alpha$ is a contact form $\xi=\ker(\alpha)$. Equipped with Definition \ref{def:formalcont}, two isocontact embeddings $i_0,i_1:(M,\xi)\lr(Y,\xi)$ are said to be formally contact isotopic if there exists a family
$$i_t:(M,\xi)\lr(Y,\xi),\quad t\in[0,1],$$
of formal contact embeddings. The central theme of this article is to show that there exist isocontact embeddings $i_0$ and $i_1$, and families of formal contact embeddings $\{i_s\}_{s\in[0,1]}$ which cannot be deformed to a family of isocontact embeddings for all $s\in[0,1]$ relative to the two given endpoints $s\in\{0,1\}=\dd[0,1]$.

\subsection{Legendrian submanifolds}\label{ssec:leg}

In this article, we will consider Legendrian submanifolds in a Darboux ball and describe them through their fronts \cite[Section 3.1]{Arnold90}. Consider the standard contact structure 
$$(\R^{2n+1},\xi_\st)=\left(\R^{2n+1},\ker\left\{dz-\sum_{i=1}^n y_i\, dx_i\right\}\right),$$
where $(x_1,y_1,\ldots,x_n,y_n,z)\in\R^{2n+1}$ are Cartesian coordinates. The projection map
$$\pi:\R^{2n+1}\lr \R^{n+1},\quad (x_1,y_1,\ldots,x_n,y_n,z)\longmapsto(x_1,\ldots,x_n,z)$$
is a Legendrian fibration and thus it can be used as a front projection \cite[Chapter 5]{ArnoldGivental01}. In particular, any Legendrian submanifold $\La\sse(\R^{2n+1},\xi_\std)$ is determined by its front projection $\pi(\La)\sse\R^{n+1}$. In this article any depiction of a Legendrian submanifold is a depiction of its front. Fronts can also be used in more general contact manifolds, see \cite[Section 2]{CasalsMurphy} for a detailed discussion and proofs of their validity.

\subsubsection{Legendrian stabilization} The operation of Legendrian stabilization has been detailed in \cite[Section~4.3]{EkholmEtnyreSullivan05a}, this operation is a crucial ingredient in our construction of formally contact isotopic non-isotopic contact embeddings. See also \cite{CieliebakEliashberg12,Murphy??}.

In order to describe this operation, consider the Legendrian $\La_{0}\sse\R^{2n+1}$ whose front $\pi_0$ is given by the disjoint union of the graphs of the two constant functions $0$ and $1$. 
\[
0,1:\R^n(x_1,\ldots,x_n)\lr\R(z).
\] 
This front is depicted in the left of Figure \ref{stabilizefig}. In addition to the Legendrian $\La_0$, we consider a second Legendrian whose front is described as follows.

Let $M\sse \R^n(x_1,\ldots,x_n)$ be a bounded smooth submanifold and consider a smooth function
$$f:\R^n(x_1,\ldots,x_n)\lr\R(z),$$
such that $1$ is a regular value, $f\cong 1.5$ in a neighborhood $\Op(M)$, and $f\cong 0$ away from $\Op(M)$. The Legendrian $\La_M\sse(\R^{2n+1},\xi_\st)$ is defined by having the Legendrian front $\pi_M$ given by the graph of the function $f$ and the function constant equal to $1$. Figure \ref{stabilizefig} on the right depicted this front $\pi_M$ in the case $n=2$ and $M=S^1$. The operation of Legendrian stabilization is the content of the following definition. 

\begin{definition}\label{def:legstab}
Let $\La\sse(Y,\xi)$ be a Legendrian submanifold and let $(B,\xi_\st)\sse(Y,\xi)$ be a Darboux ball such that the $(\La\cap B,B)$ is contactomorphic to $(\La_0,(\R^{2n+1},\xi_\st))$. The $M$-stabilization of the Legendrian $\La$ is the Legendrian submanifold obtained by replacing the intersection $(\La\cap B,B)$ with $(\La_M,\R^{2n+1})$.\hfill$\Box$

\begin{figure}[ht]{\tiny
\begin{overpic}
{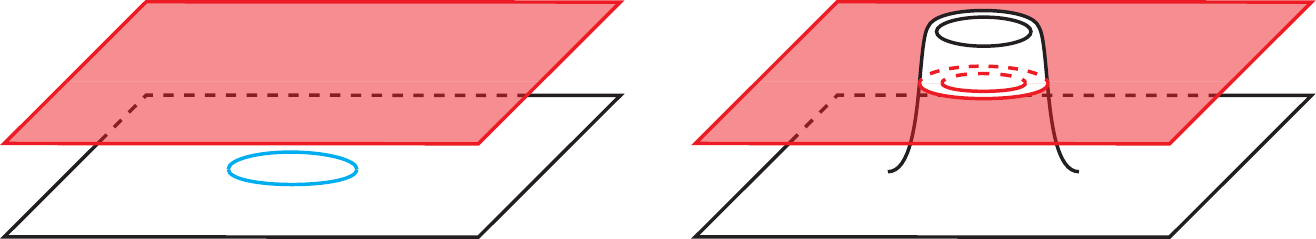}
\end{overpic}}
\caption{On the left is the front projection of the Legendrian submanifold $\La_0$ of $\R^{2n+1}$ with an embedded circle $M$ on the lower sheet. On the right is the resulting front $\La_M$ after an $M$-stabilization along that circle.}\label{stabilizefig}
\end{figure}
\end{definition}

Since $(S^{2n+1},\xi_\std)$ minus a point is contactomorphic to the standard contact structure $(\R^{2+1},\xi_\std)$, we can similarly define stabilizations of Legendrian submanifolds in $(S^{2n+1},\xi_\std)$.

In the present article, we will use the following form of stabilization. Given $\La\sse(S^{2n+1},\xi_\std)$, a Darboux ball $(B,\xi_\st)$ and a front for $\La\cap B\sse (B,\xi_\st)$, let $V\sse\R^{n+1}$ be a neighborhood of a point on a cusp edge in this front projection. Then the stabilization of the Legendrian $\La$ along an $S^1$ localized at $V$ will be denoted by $s(\La)$.

The results from \cite[Proposition 4.5]{EkholmEtnyreSullivan05a} and \cite[Appendix A]{Murphy??} combine to establish the following
\begin{lemma}[\cite{EkholmEtnyreSullivan05a}]\label{lem:formallegisotopy}
Let $\La\sse(S^{2n+1},\xi_\std)$ be a Legendrian submanifold, then the stabilized Legendrian $s(\La)\sse(S^{2n+1},\xi_\std)$ is formally Legendrian isotopic to $\La$. 
\end{lemma}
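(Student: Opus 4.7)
The plan is to reduce the statement to a local computation inside the Darboux ball $(B,\xi_\st)\sse(S^{2n+1},\xi_\std)$ where the stabilization is performed. Since $s(\La)$ agrees with $\La$ outside $B$, it suffices to produce a formal Legendrian isotopy, rel $\dd B$, between the local Legendrian model before stabilization and the stabilized model $\La_M$ with $M=S^1$ described in Definition \ref{def:legstab}. First, the smooth component of this isotopy will be obtained by explicitly lowering the bump of height $1.5$ in the front $\pi_M$ back to the level of the ambient sheet, yielding an ambient smooth isotopy of $(B,\xi_\st)$ compactly supported in the Darboux ball and carrying $\La_M$ to the original local picture. This already establishes smooth isotopy.

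The next step is to construct the family of formal Legendrian structures $(f_t,F_s^t)$ along this smooth path. At $t=0,1$ the submanifolds are genuinely Legendrian, so one may set $F_s^t\equiv df_t$ there. In between, the tangent planes $df_t(T\La)$ leave the Lagrangian Grassmannian of $\xi$, so the formal data must be adjusted through a fiberwise-injective homotopy $F_s^t$ whose endpoint $F_1^t$ lies in $\xi$ as a Lagrangian subspace. The key point I expect to use is that the loop of Lagrangian tangent planes traced as one travels around the stabilization circle $M=S^1$ bounds a disk in the Lagrangian Grassmannian: the disk bounded by $M$ inside the lower sheet of the stabilization model supplies the required null-homotopy, so there is no Maslov-class obstruction to assembling a globally continuous family $F_s^t$.

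Finally, the assertion that this local picture of stabilization preserves both the smooth isotopy class and the rotation/Maslov-type formal invariants is exactly the content of \cite[Proposition 4.5]{EkholmEtnyreSullivan05a}, combined with the explicit description of formal Legendrian classification in \cite[Appendix A]{Murphy??}; I would invoke these to conclude that the constructed interpolation genuinely realizes a formal Legendrian isotopy in the sense of Definition \ref{def:formalLeg}. The main obstacle is the middle step: producing an explicit continuous interpolation in the space of Lagrangian-valued fiberwise-injective bundle maps near the cusp pairs introduced by the bump. This is where the hypothesis $n\ges 2$ (required for $S^1\sse\R^n$ to embed so that the construction even makes sense) becomes critical, and where the higher-dimensional case differs essentially from the three-dimensional situation in which ordinary stabilization changes $\tb$ and $r$ by nontrivial amounts and hence cannot preserve the formal Legendrian class.
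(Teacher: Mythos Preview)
The paper gives no proof of this lemma, instead simply citing \cite[Proposition~4.5]{EkholmEtnyreSullivan05a} and \cite[Appendix~A]{Murphy??}; since your final paragraph rests on precisely these two references to close the argument, your proposal and the paper's treatment coincide. Your additional geometric sketch is reasonable scaffolding, though note that the literal path ``lower the bump to height~$0$'' is not an embedded isotopy in $\R^{2n+1}$ (when the maximum of the bump crosses height~$1$ both sheets have $y=0$ there and collide), so that step needs a small tweak---but this is inessential, as the cited references already supply the smooth-isotopy claim.
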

In contrast to Lemma \ref{lem:formallegisotopy}, pseudo-holomorphic invariants from symplectic field theory \cite{EkholmEtnyreSullivan05a,EliashbergGiventalHofer00} prove that there exist Legendrians $\La$ such that $\La$ is not Legendrian isotopic to its Legendrian stabilization $s(\La)$.

Finally, we shall use the following (non-standard) definition of a {\it loose} Legendrian, which will feature in the discussion of flexible Weinstein manifolds.

\begin{definition}[\cite{CieliebakEliashberg14,Murphy??}]\label{def:loose}
A Legendrian submanifold $\La$ of $(Y,\xi)$ is said to be loose if there exists a Legendrian $\La'$ such that $\La=s(\La')$.\hfill$\Box$
\end{definition}

\subsubsection{Legendrian connected sum}\label{sssec:legsum} The statement of Theorem \ref{thm:contactsurgery} describes the contact surgery diagram of a contact branch cover in terms of the higher-dimensional Legendrian connected sum of two Legendrian submanifolds. Let us include here the definition of the Legendrian connected sum \cite{Arnold90,BourgeoisSabloffTraynor15}.

Let $(Y,\xi)$ be a contact manifold and $\La_1,\La_2\sse(Y,\xi)$ two Legendrian submanifolds. For the purposes of the present work, let us assume that there exist two disjoint Darboux balls $(B_1,\xi_\st)$ and $(B_2,\xi_\st)$ respectively containing $\La_1$ and $\La_2$. Let $(B,\xi_\st)$ be the Darboux ball obtained by taking the standard neighborhood of $(B_1,\xi_\st)$ and $(B_2,\xi_\st)$ connected with any isotropic arc. The latter Darboux ball $(B,\xi_\st)$ allows us to consider $\La_1$ and $\La_2$ as Legendrians in $(\R^{2n+1},\xi_\st)$ such that their fronts in $\R^{n+1}(x_1,\ldots,x_n,z)$ are separated by the hyperplane $\{x_1=0\}$.\\

Consider an isotropic arc $\gamma\sse \R^{2n+1}$ with endpoints in $\La_1$ and $\La_2$, and assume that its front projection admits a tubular neighborhood whose boundary is Legendrian isotopic to the front projection of the $(n-1)$-dimensional standard Legendrian unknot in $\R^{n}$ times a linear interval. This will be referred to as a standard tubular neighborhood, for its existence see \cite{Geiges08}. Note that isotropic arcs $\gamma\sse \R^{2n+1}$ are classified by the $h$-principle for isotropic embeddings \cite{EliashbergMishachev02}.

\begin{definition}\label{def:legsum}
	Let $\La_1,\La_2\sse(Y,\xi)$ be two Legendrians and $\gamma\sse Y$ an arc with endpoints in $\La_1$ and $\La_2$ such that:
	\begin{itemize}
		\item[1.] $\overset{\circ}{\gamma}\sse Y\setminus\La_1\sqcup\La_2$, i.e. $\gamma$ only intersects the Legendrians at the endpoints.
		\item[2.] The points of intersection $\gamma\cap\La_1$ and $\gamma\cap\La_2$ belong to a cusp edge of the fronts of $\La_1$ and $\La_2$.
	\end{itemize}

The Legendrian connected sum of $\La_1$ and $\La_2$ along $\gamma$ is the Legendrian $\La_1\#_\gamma\La_2\sse(Y,\xi)$ whose front is obtained by removing the front projection of the neighborhoods $\Op(\gamma\cap\La_1)\cap\La_1$ and $\Op(\gamma\cap\La_2)\cap\La_2$ and concatenating with the boundary of the standard tubular neighborhood of $\gamma$.\hfill$\Box$
\end{definition}

The hypothesis that the points of intersection $\gamma\cap\La_1$ and $\gamma\cap\La_2$ belong to a cusp edge \cite{Arnold90} implies that the complements of $\Op(\gamma\cap\La_1)\cap\La_1$ and $\Op(\gamma\cap\La_2)\cap\La_2$ in the respective fronts have boundaries equal to the front of the  $(n-1)$-dimensional standard Legendrian unknot, thus making the concatenation with the standard tubular neighborhood well-defined. In using Definition \ref{def:legsum}, we will specify the precise isotropic arc $\gamma$ along which we perform the Legendrian connected sum.

\begin{remark}
 In the articles \cite[Section 4]{BourgeoisSabloffTraynor15} and  \cite{EkholmEtnyreSullivan05b} the Legendrian connected sum is implicitly constructed in the context of index 1 exact Lagrangian cobordisms, and thus Legendrian $0$-surgeries. In the classical theory of Legendrian singularities, this is the index one $A_2$-perestroika \cite[Figure 48]{Arnold90}.\hfill$\Box$
\end{remark}

Let $\La_0,\La_1\sse (Y,\xi)$ be two unlinked Legendrians, with $\La_0\sse(Y,\xi)$ loose. It follows from Definition \ref{def:legsum} and the $h$-principle for loose Legendrians \cite[Theorem 1.2]{Murphy??} that the Legendrian connected sum $\La_0\#\La_1\sse(Y,\xi)$ is a loose Legendrian. This indeed uses that $\La_0,\La_1$ exist within two disjoint Darboux balls, as in Definition \ref{def:legsum}. This will be used in Section \ref{sec:mainproof}.

 \subsection{Weinstein Hypersurfaces}\label{ssec:WeinsteinHyper}

Let $(W,\la)$ be a Liouville manifold, i.e.\ an exact symplectic manifold $(W,\omega)$ with a choice of primitive $\la$, such that $d\la=\omega$. The vector field $X_\la$, $\omega$-dual to $\la$ is called the Liouville vector field, and in this article we assume that it is a complete vector field. A Weinstein manifold is a Liouville manifold together with a Lyapunov function for $X_\la$. The central feature of Weinstein structures is that the Lyapunov function gives a Legendrian handlebody decomposition of $W$ \cite{Eliashberg90a,Weinstein91}, capturing the symplectic topology of $(W,\la)$. In this section we review the basic ingredients in the study of Weinstein hypersurfaces. The reader is referred to \cite{CasalsMurphy,CieliebakEliashberg12} and \cite{Eliashberg18} for accounts on the general theory of Weinstein manifolds.

The notion of a Weinstein hypersurface of a contact manifold is introduced in \cite[Definition 1.3]{Avdek}, in the context of Liouville hypersurfaces, and further discussed by Y.~Eliashberg as part of a Weinstein pair in \cite[Section 2.1]{Eliashberg18}.

\begin{definition}\label{def:WeinsteinHyp}
Let $(Y,\ker{\a})$ be a contact manifold, a Weinstein hypersurface $\Sigma\sse Y$ is a codimension-1 submanifold such that $(\Sigma,\a|_\Sigma)$ is compatible with a Weinstein structure of $\Sigma$.\hfill$\Box$
\end{definition}

Let $(\Sigma,\a|_\Sigma)\sse(Y,\ker{\a})$ be a Weinstein hypersurface, then $(\dd\Sigma,\a|_{\dd\Sigma})\sse(Y,\xi)$ is a contact submanifold. The image of the isocontact embeddings in Theorem \ref{thm:main} are codimension-2 contact submanifolds bounded by a Weinstein hypersurface. There are many examples of contact submanifolds that do not bound Weinstein hypersurfaces \cite{Fran2,NiVan,Fran1}.

Given two Weinstein hypersurfaces $\Sigma_0,\Sigma_1\sse(Y,\xi)$, the Weinstein hypersurface sum $$(Y,\xi){}_{\Sigma_0}{\#}{_{\Sigma_1}}(Y,\xi)$$
of $(Y,\xi)$ along $\Sigma_0$ and $\Sigma_1$ is constructed in \cite[Section 3]{Avdek}. This is the contact manifold obtained by gluing the two contact complements
$$(Y\setminus\Op(\Sigma_0),\xi),\quad (Y\setminus\Op(\Sigma_1),\xi)$$
along their boundaries $\dd\Op(\Sigma_0)\sqcup\dd\Op(\Sigma_1)$. In order to perform a contact gluing along a hypersurface in a contact manifold, we use the technology of {\it convex hypersurfaces}. In short, Moser's method \cite[Section 4]{Moser1965} shows that a Weinstein hypersurface has a contact neighborhood of the form

$$(\Sigma\times[-\epsilon, \epsilon], \ker\{\a-dt\}),$$

where $t\in[-\epsilon, \epsilon]$ is a coordinate, see \cite[Lemma~3.1]{Avdek}. This standard neighborhood can be used to show that there exists a contact neighborhood $\Op(\Sigma)$ with a canonical convex hypersurface boundary $\dd\Op(\Sigma)$. Then the standard contact gluing along convex hypersurfaces is used to build $(Y,\xi){}_{\Sigma_0}{\#}{_{\Sigma_1}}(Y,\xi)$. We refer the reader to the work of E.~Giroux \cite[Section I]{Giroux91} for convex gluing in the three-dimensional case, and \cite[Section~3.3]{Avdek} and \cite[Section 2]{HondaHuang} for the higher-dimensional theory.

\begin{remark}
The Weinstein pages of adapted open books for contact manifolds $(Y_1,\xi_1)=\ob(W_1,\p_1)$ and $(Y_2,\xi_2)=\ob(W_2,\p_2)$ are convex hypersurfaces. If the Weinstein pages are Weinstein isomorphic, then it is possible to glue these adapted open books \cite{Avdek} along their Weinstein pages. Adapted open books will be further discussed in Section \ref{ssec:ob}.\hfill$\Box$
\end{remark}

\subsubsection{Cobordism description of Weinstein hypersurface sums}\label{sssec:sumcob} In this article, we shall use the following result, stated in \cite[Theorem 1.9]{Avdek}. We include our proof of the result since we will use the details of our proof below. If $(W,\la)$ is a Weinstein manifold, then we denote by $(W^{(1)},\la^{(1)})$ the Weinstein cobordism obtained by taking the product of $(W,\la)$ with a 2-dimensional Weinstein 1-handle $(D^1\times D^1,\la_\st)$.
We will momentarily provide more details for $(W^{(1)},\la^{(1)})$ and prove Proposition~\ref{prop:weinsteinsumcob}.

\begin{prop}\label{prop:weinsteinsumcob}
	Let $(Y_0,\xi_0)$ and $(Y_1,\xi_1)$ be two contact manifolds and
	$$i_0:(W,\la)\lr(Y_0,\xi_0),\quad i_1:(W,\la)\lr(Y_1,\xi_1)$$
	be two Weinstein hypersurface embeddings. Then the Weinstein sum
	$$(Y_0,\xi){}_{im(i_0)}{\#}{_{im(i_1)}}(Y_1,\xi)$$
	is the convex boundary of the Weinstein cobordism obtained by attaching the Weinstein cobordism $(W^{(1)},\la^{(1)})$ to the convex part of the symplectization of $(Y_0,\xi_0)\sqcup(Y_1,\xi_1)$ along the two embeddings of the Weinstein hypersurface $(W,\la)$.
	
	Furthermore, given a Weinstein handle decomposition of $(W,\la)$ we can explicitly describe the Legendrian attaching spheres for a Weinstein cobordism from $(Y_0,\xi_0)\sqcup(Y_1,\xi_1)$  to their Weinstein sum $(Y_0,\xi){}_{im(i_0)}{\#}{_{im(i_1)}}(Y_1,\xi)$.
\end{prop}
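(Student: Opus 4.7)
The plan is to realize $(W^{(1)},\la^{(1)})$ explicitly as a Liouville cobordism whose concave boundary consists of two copies of the standard contact neighborhood of a Weinstein hypersurface, attach it to the convex end of the symplectization of $(Y_0,\xi_0)\sqcup(Y_1,\xi_1)$ along the embeddings $i_0,i_1$, and then compute the convex boundary of the resulting Weinstein cobordism.

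For the first claim, equip $W^{(1)}=W\x D^1_x\x D^1_y$ with the product Liouville form $\la^{(1)}=\la+(2y\,dx+x\,dy)$, whose Liouville vector field $X_\la+(-x\,\dd_x+2y\,\dd_y)$ points inward along $W\x\{x=\pm 1\}\x D^1_y$ and outward along $\dd W\x D^1\x D^1$ together with $W\x D^1\x\{y=\pm 1\}$. After smoothing the codimension-$2$ corners, the concave boundary of $W^{(1)}$ is the disjoint union of two copies of $(W\x D^1_y,\la|_W+dy)$, which by \cite[Lemma~3.1]{Avdek} is the standard contact model of a neighborhood of a Weinstein hypersurface. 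Moser's argument then identifies these two components with the standard neighborhoods $\Op(i_0(W))\sse Y_0$ and $\Op(i_1(W))\sse Y_1$. Attaching $W^{(1)}$ to the top of the symplectization along these identifications is a convex hypersurface gluing in the sense of Honda--Huang, and the resulting convex boundary is by construction the Weinstein sum $(Y_0,\xi){}_{im(i_0)}{\#}{_{im(i_1)}}(Y_1,\xi)$.

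For the explicit Legendrian description, fix a Weinstein Morse function $f_W$ on $(W,\la)$ realizing the given handle decomposition and a Morse function $g$ on $D^1\x D^1$ compatible with the above Liouville structure and having a unique index-$1$ critical point at the origin. The sum $f_W+g$ is then a Weinstein Morse function on $W^{(1)}$ whose critical points are in bijection with those of $f_W$, with indices shifted up by one. Each $k$-handle of $(W,\la)$ with isotropic attaching sphere $\La^{k-1}$ therefore produces a $(k+1)$-handle of $W^{(1)}$, and a direct analysis of the stable manifolds of the product Morse function shows that its Legendrian attaching sphere in the intermediate contact manifold (obtained after attaching the earlier handles) is
\[
\widetilde{\La}^k \;=\; D^k_0\;\cup_{\La}\;\bigl(\La\x[-1,1]\bigr)\;\cup_{\La}\;D^k_1 \;\cong\; S^k,
\]
where $D^k_j$ is the isotropic core disk of the $k$-handle of $W$ sitting inside $i_j(W)\sse Y_j$ and the cylinder $\La\x[-1,1]$ threads through the associated $1$-handle joining $Y_0$ and $Y_1$. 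The main technical obstacle is verifying, at each stage of the handle decomposition, that $\widetilde\La^k$ is genuinely a Legendrian sphere in the intermediate contact manifold and matches the attaching data predicted by the product Weinstein structure; this is handled by combining the explicit contact model on the standard neighborhood with the tracked stable and unstable manifolds of the critical points of $f_W+g$.
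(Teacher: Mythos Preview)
Your proof is correct and follows essentially the same approach as the paper: both identify the concave boundary of $(W^{(1)},\la^{(1)})$ with two copies of the contactization neighborhood of $(W,\la)$, glue along the hypersurface embeddings, and then observe that crossing a Weinstein $k$-handle of $W$ with the $1$-handle $H^{(1)}$ yields a Weinstein $(k+1)$-handle whose attaching sphere is the union of the two isotropic core disks in $Y_0,Y_1$ with the isotropic cylinder $\La\times[-1,1]$ running through the $1$-handle. Your use of the product Morse function $f_W+g$ is exactly the mechanism the paper invokes when it says ``the model for a Weinstein $k$-handle times $H^{(1)}$ \ldots\ is the model of a Weinstein $(k+1)$-handle''; the only differences are notational (your $(x,y)$ versus the paper's $(t,s)$, and a harmless sign in the restricted form $\la\pm dy$ on the two concave components).
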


\begin{remark} Technically, \cite[Theorem 1.9]{Avdek} contains the first result in this direction, showing that the Weinstein connected sum yields a Weinstein cobordism. However, \cite{Avdek} does not provide an explicit description of the contact surgery diagram for the cobordism - though some of the provided examples allude to how this is done in dimension 3 - and we need the general improvement given by the proof of Proposition~\ref{prop:weinsteinsumcob} and Section~\ref{sec:covers} for our results.\hfill$\Box$
\end{remark}

Let $(W,\la,\p)$ be a Weinstein manifold and $H^{(1)}=(D^1\times D^1,\la_\st,\p_\st)$ a 2-dimensional Weinstein 1-handle \cite[Section 3]{Weinstein91}. Specifically, consider the $1$-form $\la_\st=2s\, dt + t\, ds$ where $(s,t)$ are coordinates on the product $D^1\times D^1$ and the Morse function $\p_\st=-s^2+t^2$.
Endow the product $W\times(D^1\times D^1)$ with its natural Weinstein structure \cite[Example 11.12.(3)]{CieliebakEliashberg12} given by $\la+\la_\st$ and $\p+\p_\st$, and denote the resulting Weinstein manifold by $(W^{(1)},\la^{(1)},\p^{(1)})$. Observe that if $v$ is the Liouville field associated to the Liouville 1-form $\la$ then $v+2s\frac{\partial}{\partial s} - t\frac{\partial}{\partial t}$ will be the Liouville field for the sum $\la+\la_\st$. The product structure induces the following decomposition of the smooth boundary of $(W^{(1)},\la^{(1)},\p^{(1)})$:
\[
\dd W^{(1)}= \left[\dd W\times (D^1\times D^1)\right] \sqcup \left[W\times ((D^1\times S^0)\sqcup(S^0\times D^1))\right].
\]
Let the piece $(S^0\times D^1)\sse H^{(1)}$ be the attaching region of the Weinstein 1-handle $H^{(1)}$, it is endowed with the contact structure $(J^1S^0,\xi_\st)$ where $S^0$ is the isotropic attaching sphere. The Weinstein manifold $(W^{(1)},\la^{(1)},\p^{(1)})$ can be understood as a Weinstein cobordism with concave contact boundary $(W\times (S^0\times D^1),\ker(\la+\la_\st))$, convex contact boundary
$$((\partial W\times (D^1\times D^1)) \cup (W\times (D^1\times S^0)),\ker(\la+\la_\st))$$
and with corners along $\dd W\times \dd (D^1\times D^1)$.

\begin{proof}[Proof of Proposition \ref{prop:weinsteinsumcob}] The contact structure induced on each of the two connected components of $(W\times (D^1\times S^0),\xi_{\st})$ by the Liouville form $\la^{(1)}$ is contactomorphic to the contactization of the Weinstein structure $(W,\la)$. Thus, in the hypothesis of Proposition \ref{prop:weinsteinsumcob} we can identify the concave end of $(W^{(1)},\la^{(1)},\p^{(1)})$ with contact neighborhoods of the two Weinstein hypersurfaces $im(i_0)\sqcup im(i_1)\sse (Y_0,\xi_0)\sqcup(Y_1,\xi_1)$.
	
	In addition, given the Weinstein structure of the Weinstein handle model \cite{Weinstein91}, we can identify the remaining $D^1$-direction in the Weinstein 1-handle $H^{(1)}$ with the symplectization Liouville direction of the symplectization of the contact manifold $(Y_0,\xi_0)\sqcup(Y_1,\xi_1)$. In this manner we attach the Weinstein cobordism $(W^{(1)},\la^{(1)})$ to the symplectization of $(Y_0,\xi_0)\sqcup(Y_1,\xi_1)$ along $(W,\la)$. Since the local model of a Weinstein sum is obtained by identifying the contact neighborhoods along the contactization direction we conclude that the Weinstein sum along $(W,\la)$ is equivalently obtained by exchanging the concave contact boundary $W\times ((S^0\times D^1)$ of the Weinstein cobordism $(W^{(1)},\la^{(1)},\p^{(1)})$ by its convex contact boundary $(\partial W\times (D^1\times D^1)) \cup (W\times (D^1\times S^0))$. This proves the first part of the statement of Proposition \ref{prop:weinsteinsumcob}.
	
Suppose now that $(W,\la)$ is presented as a Weinstein handlebody, we shall now show how to build the symplectic cobordism $(\overline{W},\overline{\la})$ above from a piece of the symplectization of  $(Y_0,\xi_0)\sqcup(Y_1,\xi_1)$ by attaching a Weinstein $(k+1)$-handle for each $k$-handle in the decomposition of $(W,\la)$. The construction is described as follows.

\begin{construction}\label{con}
Consider a Weinstein handle decomposition of $(W,\la)$, where the Weinstein handles are attached in order of increasing index, and let $h^k_1,\ldots, h_{i_k}^k$ be the $k$-handles in the give Weinstein handlebody decomposition. Let us denote by $W_k$ the union of all Weinstein handles of index less than or equal to $k$, and we emphasize that $h^{k}$ is attached to $\partial W_{k-1}$ along an isotropic $(k-1)$-sphere in $\partial W_{k-1}$. It should also be noted \cite{Weinstein91} that the 1-form $\lambda$ vanishes on the core disk $D^k_i$ of $h^k_i$ and thus the image of $D_i^k$ in $W_{k-1}$ will be an isotropic submanifold of $(Y_0,\xi_0)\sqcup(Y_1,\xi_1)$.

For each $0$-handle $h^0_i$, $1\leq i\leq i_0$, the image of the core in the hypersurfaces $\Sigma_0$ and $\Sigma_1$ gives rise to two points in $(Y_0,\xi_0)\sqcup(Y_1,\xi_1)$, one point belonging to each component of this disjoint union decomposition. This will be the attaching neighborhood of a Weinstein 1-handle for $W$.  Inductively, let us assume that we have attached all the $k$-handles of $\overline{W}$ resulting in $\overline{W}_k=([0,1]\times M) \cup (D^1 \times D^1 \times W_{k-1})$, where the components of $S^0 \times D^1 \times W_{k-1}$ have been glued to their images in $Y_0$ and $Y_1$ and the upper boundary of the cobordism is $(Y_0,\xi_0)\sqcup(Y_1,\xi_1)$ minus two copies of the standard neighborhoods $N_{W_{k-1}}$ of the $W_{k-1}$, union $D^1\times (\partial (D^1 \times W_{k-1}))$. Notice that $\partial (D^1\times W_{k-1})$ is a convex surface and the contact structure on $D^1\times (\partial (D^1\times W_{k-1}))$ is simply an $D^1$--invariant neighborhood of $\partial (D^1 \times W_{k-1})$.

Let us then explain how to attach the Weinstein $(k+1)$-handles. For each $k$-handle $h^{k}_i$ of $W$, $1\leq i\leq i_k$, the image of its core $C^{k}_i$ in $Y_0$ and $Y_1$ gives rise to two isotropic disks in $(Y_0,\xi_0)\sqcup(Y_1,\xi_1)$, and the attaching sphere $\partial C_i^k$ gives an isotropic annulus $D^1\times \partial C_i^k$ in  $D^1 \times (\partial W_{k-1})$. The union of the disks and annulus will give an isotropic $S^k$ in the upper boundary of $\overline{W}_k$, and the given trivializations of the conformal normal bundle to the $C^k_i$  extend over $D^1 \times \partial C_i^k$. We attach a Weinstein $(k+1)$-handle to $W_k$ along this sphere. \hfill$\Box$
\end{construction}

Finally, the above construction does recover $(\overline{W},\overline{\la})$ since the model for a Weinstein $k$-handle times $H^{(1)}$ with the vector field and Liouville form from above is the model of a Weinstein $(k+1)$-handle, and the attaching isotropic sphere is precisely as indicated. This concludes the proof of Proposition \ref{prop:weinsteinsumcob}.
\end{proof}

\subsubsection{Legendrian lifts} Let $L\sse(\Sigma,\a)$ be an exact Lagrangian in a Weinstein hypersurface $(\Sigma,\a)\sse(Y,\xi)$, with primitive $f:L\lr[-\epsilon, \epsilon]$ such that $\a|_{L}=df$. Consider the standard neighborhood $(\Sigma\times[-\epsilon, \epsilon], \ker\{\a-dt\})$, then
\[
i:L\lr\Sigma\times[-\epsilon, \epsilon]: p\longmapsto (p,f(p))
\] 
is a Legendrian embedding. By definition, any Legendrian representative $\La\sse(Y,\xi)$ of the isotopy type of the image $\im(L)$ is said to be a {\em Legendrian lift} of $L\sse(\Sigma,\a)$ into $(Y,\xi)$.

The case where the primitive $f\equiv 0$ is identically zero is also of interest, as for the zero section of a cotangent bundle, in which case the Lagrangian submanifold itself is a Legendrian submanifold, when regarded in $(Y,\xi)$. We refer to \cite[Section 2]{ArnoldGivental01,CasalsMurphy} for discussions and details on Legendrian lifts.

\subsubsection{Flexible Weinstein Manifolds} Flexible Weinstein fillings are introduced in \cite[Chapter 11.8]{CieliebakEliashberg12}. By definition \cite[Definition 11.29]{CieliebakEliashberg12}, a $2n$-dimensional Weinstein manifold is {\em flexible} if it admits a Weinstein handlebody decomposition \cite{CasalsMurphy} such that all the critical $n$--handles are attached along loose Legendrian spheres, as introduced in Definition~\ref{def:loose} above. The only currently available method to detect flexibility of a Weinstein manifold is to draw a Weinstein handlebody diagram \cite[Section 2]{CieliebakEliashberg12} exhibiting the loose charts for the attaching Legendrian submanifold. This is the method used in \cite[Section 4]{CasalsMurphy} to give many examples of affine manifolds whose underlying Weinstein structure is flexible, and also the method we will use in Section \ref{sec:mainproof} to exhibit flexible Weinstein fillings, which is crucial in our proof of Theorem \ref{thm:main}.
\subsection{Contact branched covers}\label{ssec:contactbranch}
The proof of Theorem \ref{thm:main} and Theorem~\ref{cor:infinitelymany} also rely on the understanding of higher-dimensional contact branched covers, which we review in this section. The operation of a contact branched cover in 3-dimensional contact topology was first introduced J.~Gonzalo in \cite{Gonzalo87}, and contact branched covers in the higher dimension were constructed by H.~Geiges \cite{Geiges97}, as follows.
 
Let $C\sse(Y,\ker\{\a\})$ be a codimension-2 contact submanifold and $p:Y(C)\lr Y$ a smooth branched cover with branched locus $C\sse Y$. The local model for the smooth branched covering along $C$ is given by

$$P:\R^n\times\C\lr\R^n\times\C: (x,z)\lr (x,z^n),$$

where $C$ is locally identified as $\R^n\times\{0\}\sse \R^n\times\C$. See \cite{EtnyreFurukawa17,Rolfsen76} for more details on smooth branched covers. In this local model, the pull-back 1-form $P^*(\a)$ is contact away from $\R^n\times\{0\}$ and it can be perturbed with compact support to a contact form near $\R^n\times\{0\}$. In particular, this contact local model can be glued to the global pull-back $p^*(\a)$, which is contact in the complement of $C\sse(Y,\ker\{\a\})$.

The precise result reads as follows.

\begin{thm}[\cite{Geiges97},\cite{OzturkNiederkruger07}]\label{bccontact}
Let $(Y,\ker\{\a\})$ be a contact manifold, $C\sse (Y,\ker\{\a\})$ a contact submanifold and $p: Y(C)\lr Y$ a smooth branched cover with branch locus $C$.

Then there exists a contact structure $\ker\{\a_1\}$ on $Y(C)$, unique up to contact isotopy, with a path $(\a_t)$ of $1$-forms such that

\begin{itemize}
	\item[-] $\alpha_0=p^*\alpha$, $\alpha_t$ is a contact 1-form for $t\in(0,1]$,
	\item[-] $d(\partial_t \alpha_t)|_{t=0}$ is a positive form on the normal bundle of the branched set $p^{-1}(C)$. 
\end{itemize}
\end{thm}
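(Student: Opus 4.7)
The plan is to use the contact tubular neighborhood theorem to reduce to an explicit local model near the branch locus, construct the path $\alpha_t$ by pointwise deformation of $p^*\alpha$, and then deduce uniqueness up to isotopy from Gray stability.

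First, I would invoke the standard neighborhood theorem for codimension-$2$ contact submanifolds to identify an $\epsilon$-tubular neighborhood of $C\sse(Y,\ker\a)$ with $(C\x D^2_\epsilon,\ker\{\a_C+r^2d\theta\})$, where $(r,\theta)$ are polar coordinates on the normal disk and $\a_C$ is a contact form on $C$. Restricted to this neighborhood, a smooth branched cover of local branching order $k$ along $C$ is modeled by $(x,r,\theta)\longmapsto(x,r^k,k\theta)$, so that
\[
p^*\a=\a_C+k\,r^{2k}\,d\theta .
\]
Away from $r=0$ this is already contact; along $p^{-1}(C)=\{r=0\}$ the differential $d(p^*\a)=d\a_C+2k^2r^{2k-1}\,dr\wedge d\theta$ has an $(n-1)$-st power that vanishes transversely in the normal disk direction, which is exactly what obstructs the contact condition.

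Second, I would define the path by
\[
\a_t \;=\; p^*\a \,+\, t\,\chi(r)\,r^2\,d\theta ,
\]
where $\chi:[0,\epsilon)\to[0,1]$ is a smooth cutoff, equal to $1$ near $r=0$ and supported in a small subinterval. A direct computation gives
\[
\a_t\wedge(d\a_t)^n \;=\; 2t\,r\,\bigl(\a_C\wedge(d\a_C)^{n-1}\bigr)\wedge dr\wedge d\theta \;+\; O(r^{2}),
\]
which is a positive volume form for $r$ small and $t\in(0,1]$, using that $\a_C$ is contact on $C$. Outside the support of $\chi$ the form $\a_t$ agrees with $p^*\a$, which is contact away from $p^{-1}(C)$; in the intermediate annulus where $\chi'(r)\neq0$ one must check that the cross terms do not spoil positivity, which is achieved by taking $\chi$ monotone with sufficiently small derivative and, if necessary, shrinking $\epsilon$. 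Finally, $\partial_t\a_t|_{t=0}=\chi(r)r^2d\theta$, whose differential restricted to the normal bundle of $p^{-1}(C)$ equals $2r\,dr\wedge d\theta$, verifying the positivity condition.

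Third, I would address uniqueness. Given any two paths $(\a_t)$ and $(\tilde\a_t)$ satisfying the stated conditions, both endpoint contact forms $\a_1,\tilde\a_1$ equal $p^*\a$ outside a neighborhood of $p^{-1}(C)$, and the positivity of $d(\partial_t\a_t)|_{t=0}$ pins down the conformal symplectic structure on the normal bundle to $p^{-1}(C)$. The convex combination $s\a_1+(1-s)\tilde\a_1$ is contact for $s\in[0,1]$ after a compactly supported modification near $p^{-1}(C)$, and Gray's stability theorem then yields an ambient contact isotopy of $Y(C)$ taking $\ker\a_1$ to $\ker\tilde\a_1$.

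The main obstacle is controlling the contact condition in the intermediate annular region where the cutoff $\chi$ interpolates; since $p^*\a$ degenerates as $r\to 0$ of order $r^{2k-1}$ while the perturbation contributes a term of order $r$, one must verify that the positive contribution dominates throughout the gluing region, which is a matter of carefully choosing the cutoff and the size of the neighborhood rather than a conceptual difficulty.
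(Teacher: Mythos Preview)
The paper does not give its own proof of this theorem: it is stated as a cited result from Geiges and \"Ozt\"urk--Niederkr\"uger, preceded only by a brief sketch describing the local branched-cover model $P:(x,z)\mapsto(x,z^n)$ on $\R^n\times\C$ and asserting that $P^*\alpha$ can be perturbed with compact support to a contact form near the branch locus and then glued to the global pullback. Your proposal is a correct fleshing-out of exactly that sketch (and of the argument in the cited references), so there is nothing to compare against in the paper itself.

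One small caveat on your uniqueness step: the convex combination $s\alpha_1+(1-s)\tilde\alpha_1$ of two contact forms has no reason to be contact in general, so that sentence as written does not go through. The cleaner route is to interpolate between the two \emph{paths} $(\alpha_t)$ and $(\tilde\alpha_t)$ rather than just their endpoints; the positivity hypothesis on $d(\partial_t\alpha_t)|_{t=0}$ is precisely what guarantees the resulting two-parameter family is contact for all small $t>0$, and then Gray stability finishes. This is a one-line repair, not a structural gap.
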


The contact manifold $(Y(C),\ker\{\a_1\})$ in Theorem \ref{bccontact} is called the contact branched cover of $(Y,\ker\{\a\})$ along the contact submanifold $C$.

\subsection{Open book decompositions}\label{ssec:ob}
In this article, we oftentimes describe contact manifolds by using adapted open book decompositions as introduced by E.~Giroux \cite{Giroux02}. We describe the basic features necessary for the present work, more details can be found in \cite{Colin08, vanKoert17} and \cite[Section 7.3]{Geiges08}.

Given a smooth manifold $W$ with boundary and a diffeomorphism compactly supported in the interior of $W$, $\varphi\in\Diff^c(W)$, we can consider its mapping torus
\[
T_\varphi = W\times [0,1]/\sim,
\]
where $(x,1)\sim(\varphi(x),0)$. The smooth manifold $T_\varphi$ has boundary $\partial  W\times S^1$, and we obtain a closed manifold $\ob(W,\varphi)$ by gluing $T_\varphi$ to $(\partial W)\times D^2$ with the identity diffeomorphism. The smooth manifold $\ob(W,\varphi)$ is said to have an open book decomposition associated to the pair $(W,\varphi)$ \cite{Rolfsen76}.
 
The main contact topology result we use is \cite[Proposition 9]{Giroux02}, a higher-dimensional version of the 3-dimensional construction of W.~Thurston and H.~Winkelnkemper, which states that a Liouville manifold $(W,\la)$ and a symplectomorphism compactly supported on the interior of $W$, $\varphi\in\Symp^c(W,d\la)$, determine a unique contact structure $\xi_\varphi$ on the smooth manifold $\ob(W,\varphi)$, up to contact isotopy. See \cite[Theorem 7.3.3]{Geiges08} for a proof.

Let $(Y,\xi)$ be a contact manifold. In the course of the article we will use the notation
$$(Y,\xi)=\ob(W,\varphi),$$
to indicate that the contact manifold $(Y,\xi)$ is contactomorphic to $(\ob(W,\varphi),\xi_\varphi)$, i.e. that $\xi$ is supported by the open book $\ob(W,\varphi)$ in the language of \cite{Geiges08}.

\begin{example}\label{ex:exampleOB}
Let $(T^*S^{n},\la_\std)$ be the standard Liouville structure on the cotangent bundle $T^*S^{n}$ of the standard round $n$-sphere $S^n$. The square of the distance to the zero section is a Lyapunov function for $X_{\la_\std}$, and thus endows $(T^*S^n, \la_\std)$ with a Weinstein structure \cite[Example 11.12.(2)]{CieliebakEliashberg12}.

Denote by $\tau_{S^{n}}\in\Symp^c(T^*S^{n},d\la_\std)$ the higher-dimensional Dehn twist along the zero section $S^{n}$. This is the compactly supported symplectomorphism initially introduced in \cite[Section 2]{Arnold95} in the 4-dimensional case. The general definition can be found in \cite[Chapter III.16.(c)]{Seidel08} and \cite[Section 4.3]{Colin08}, it is likely the best understood non-trivial symplectomorphism as of the writing of this article.

The Milnor fibration construction \cite{Giroux02} yields the contactomorphism
$$(S^{2n+1},\xi_{\st})=\ob((T^*S^{n},\la_\std),\tau_{S^{n}}).$$
In addition, the exact Lagrangian zero section $S^{n}\sse T^*S^{n}$ lifts to the standard Legendrian unknot $\La_0\sse (S^{2n+1},\xi_{\st})$, we refer to the reader to \cite{CasalsMurphyPresas} for details.\hfill$\Box$
\end{example}

\subsubsection{Open Book Stabilization}\label{sssec:obstab}

Let $(Y,\xi)=\ob(W,\p)$ be an adapted open book decomposition, the operation of stabilization of $\ob(W,\p)$ allows us to obtain a new open book also compatible with $(Y,\xi)$. This operation, which we now describe, was initially introduced in \cite{Giroux02}, and see also \cite[Section 4.3]{Colin08}.

Given $(Y,\xi)=\ob(W,\p)$, consider a properly embedded Lagrangian disk $D\sse(W,\la)$ with Legendrian boundary $\partial D\sse(\dd W,\ker\{\la|_{\dd W})$. Perform a critical Weinstein handle attachment \cite{CieliebakEliashberg12,Weinstein91} along the Legendrian sphere $\partial D\sse(\dd W,\ker\{\la|_{\dd W})$, resulting in a Weinstein manifold $(W_{\dd D},\la)$. The Lagrangian core of the Weinstein handle and the Lagrangian filling $D\sse (W_{\dd D},\la)$ glue along the Legendrian sphere $\dd D\sse(\dd W,\ker\{\la|_\dd W\})$ to form a Lagrangian sphere $S\sse (W_{\dd D},\la)$.

Let $\tau_S$ denote the Dehn twist along this exact Lagrangian sphere $S\sse(W_{\dd D},\la)$, and extend the compactly supported symplectomorphic $\varphi\in \Symp^c(W,d\la)$ by the identity to an eponymous compactly supported symplectomorphism $\varphi\in \Symp^c(W_{\dd D},d\la)$. The open book decomposition $(W_{\dd D},\tau_S\circ\p)$ of the contact manifold $\ob(W_{\dd D},\tau_S\circ\p)$ is said to be the stabilization of $(W,\p)$ along the Lagrangian disk $D\sse(W,\la)$.

The central property of this stabilization operation for contact open books is that it preserves the contactomorphic type of the resulting contact manifold. This is the content of the following:

\begin{thm}[\cite{Colin08,Giroux02,vanKoert17}]\label{thm:obstab}
The contact manifolds $(\ob(W,\varphi), \xi_\varphi)$ and $(\ob((W_{\dd D},\tau_S\circ\p)), \xi_{\tau_S\circ\p})$ are contactomorphic.
\end{thm}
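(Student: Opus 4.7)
My plan is to reduce the claim to the statement that contact connect-summing $(Y,\xi)=\ob(W,\varphi)$ with the standard sphere $(S^{2n+1},\xi_\std)$ does not change the contactomorphism type, via the observation that page-level stabilization realizes such a connect sum at the level of open books. The starting point is Example \ref{ex:exampleOB}, which provides the model open book $(S^{2n+1},\xi_\std)=\ob(T^*S^n,\tau_{S^n})$ in which the zero section is the Lagrangian sphere supporting the Dehn twist. Thus one should regard the critical handle $H$ attached along $\dd D\sse\dd W$ together with the Lagrangian disk $D\sse W$ as building an embedded copy of $(T^*S^n,\la_\std)$ inside $(W_{\dd D},\la)$, with $S$ corresponding to the zero section.

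First I would set up the local model near the binding. By the standard neighborhood theorem for Legendrian spheres in $(\dd W,\ker(\la|_{\dd W}))$, the Legendrian $\dd D$ has a neighborhood contactomorphic to a jet-space neighborhood of the zero section in $J^1(S^{n-1})$. Under this identification, together with the standard contact neighborhood $(\Sigma\times[-\e,\e],\ker(\a-dt))$ of a page in $(Y,\xi)$, the Weinstein handle $H$ lives in a Darboux-type neighborhood of a binding arc. Cutting $(Y,\xi)$ along an appropriate convex hypersurface through this neighborhood produces the boundary connect-sum decomposition
\[
\ob(W_{\dd D},\tau_S\circ\varphi)\;=\;\ob(W,\varphi)\,\natural_{\ob}\,\ob(T^*S^n,\tau_{S^n}),
\]
where $\natural_{\ob}$ denotes the higher-dimensional Murasugi/plumbing sum of open books, performed along the Lagrangian disks $D\sse W$ and a cotangent fiber disk $D^n\sse T^*S^n$. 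The composition of monodromies is forced: away from the plumbing region each factor acts by its original monodromy, and on the plumbing region the Dehn twist $\tau_S$ along the sphere $S$ obtained from $D$ and the core of $H$ appears precisely once, so the total monodromy is $\tau_S\circ\varphi$ as required.

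Next I would invoke the higher-dimensional analogue of Torisu's theorem, namely that Murasugi sum of adapted open books realizes contact connect sum at the level of contact manifolds (this follows by viewing the plumbing region as a standard convex hypersurface and applying the Weinstein hypersurface sum technology recalled in Section~\ref{ssec:WeinsteinHyper}; cf.\ \cite{Colin08,vanKoert17}). Combining with the previous step,
\[
(\ob(W_{\dd D},\tau_S\circ\varphi),\xi_{\tau_S\circ\varphi})\;\cong\;(Y,\xi)\,\#\,(S^{2n+1},\xi_\std)\;\cong\;(Y,\xi),
\]
where the final contactomorphism is the standard fact that contact connect sum with the standard sphere is the identity in the contact category.

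The main obstacle is establishing the Murasugi decomposition rigorously in higher dimensions. One must verify (i) that the Weinstein structure on $W_{\dd D}$ obtained by handle attachment agrees, up to Weinstein homotopy, with the one on $W\natural T^*S^n$ obtained by plumbing along $D$ and a fiber disk, and (ii) that under this identification the extension-by-identity of $\varphi$ and the ambient $\tau_S$ factor as $\varphi\cdot\tau_{S^n}$ on the plumbed page, with supports in the correct pieces. Part (i) is a handle-cancellation/Weinstein homotopy argument using the Lyapunov function, and part (ii) reduces to a compactly supported isotopy of symplectomorphisms, both of which should be carried out explicitly in a Darboux neighborhood of the binding component that meets $\dd D$.
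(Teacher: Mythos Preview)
The paper does not give its own proof of this statement: Theorem~\ref{thm:obstab} is simply cited from \cite{Colin08,Giroux02,vanKoert17} and used as a black box. So there is nothing in the paper to compare your argument against.

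That said, your outline is the standard one from the cited references: realize the stabilized open book as a Murasugi/plumbing sum of $\ob(W,\varphi)$ with $\ob(T^*S^n,\tau_{S^n})$, invoke the higher-dimensional version of Torisu's result that such a sum supports the contact connected sum, and then use $(Y,\xi)\#(S^{2n+1},\xi_\std)\cong(Y,\xi)$. This is exactly the line of argument in \cite[\S4.3]{Colin08} and \cite{vanKoert17}. Your identification of the two technical points (i) and (ii) is accurate; in the references these are handled by working in an explicit collar model near the binding where the handle attachment and the plumbing are visibly the same Weinstein structure, and the monodromy factorization follows from the support conditions. Nothing in your sketch is wrong, but be aware that what you are reproducing is precisely the content of the citations rather than anything the present paper contributes.
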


Finally, the word stabilization appears in many related contexts in the contact topology literature. The open book stabilization described above is the geometric operation printed in the contact boundary when a stabilization of a symplectic Lefschetz fibration is performed \cite{CasalsMurphy}. (This is in turn equivalent to the stabilization of a Weinstein handlebody with a critical cancelling pair \cite[Section 2]{CasalsMurphy}.) Thus stabilizing symplectic Lefschetz fibrations and contact open books are equivalent. In addition, \cite[Section 6]{CasalsMurphyPresas} explains how the stabilization of a Legendrian submanifold introduced in Subsection~\ref{ssec:leg} is related to the negative stabilization of a contact open book, and see also \cite{Etnyre04b} for its 3-dimensional precedent.

\begin{example}
Consider the contact open book decomposition $\ob(T^*S^{n},\tau_{S^{n}})$ in Example~\ref{ex:exampleOB}. The cotangent fiber at any point is a Lagrangian disk $D\sse(T^*S^{n},\la_{\std})$, and we can perform a stabilization of $\ob(T^*S^{n},\tau_{S^{n}})$ along this Lagrangian disk. The Weinstein manifold obtained by the Weinstein handle attachment along $\dd D\sse\dd(T^*S^{n},\la_{\std})$ is the plumbing of two copies of $(T^*S^n,\la_\st)$, and the resulting monodromy is the composition of two Dehn twists, one along each of the two spherical zero sections.

This stabilization can be performed iteratively by choosing the Lagrangian disk $D$ to be a cotangent fiber of the last copy of $(T^*S^{n},\la_{\std})$ being plumbed, where the cotangent fiber is chosen to be disjoint from the intersection created in the plumbing procedure \cite{CasalsMurphy,vanKoert17}.

\begin{figure}[ht]{\tiny
		\begin{overpic}
			{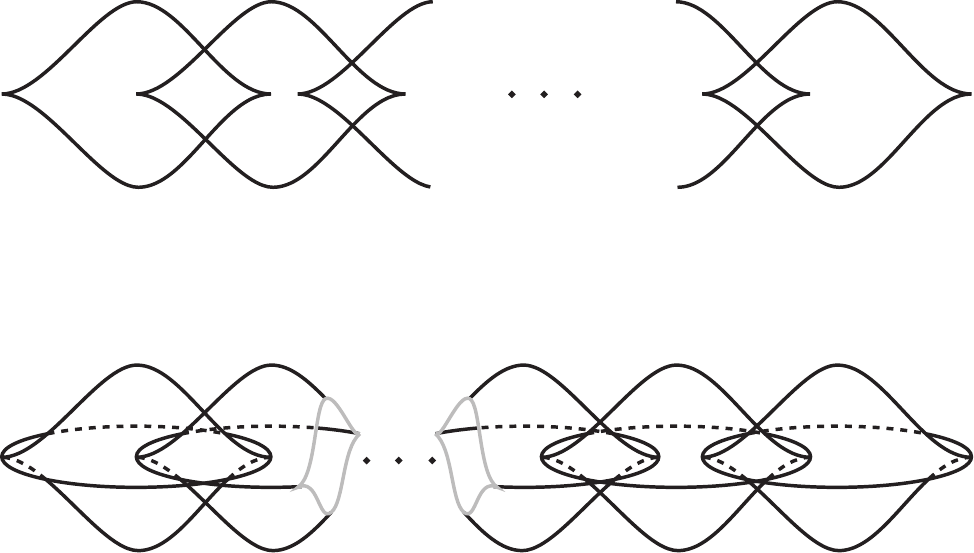}
	\end{overpic}}
	\caption{Weinstein handle diagrams for the 4- and 6-dimensional Milnor fibers of the $A_k$ singularity, where there are $k$ Legendrian spheres in each figure, corresponding to the Milnor number $\mu(A_k)=k$.}
	\label{AkFiber}
\end{figure}

In precise detail, let $(A^{2n}_k,\la_\st)$ be the symplectic plumbing of $k$ copies of $(T^*S^{n},\la_{\std})$ according to the $A_k$-Dynkin diagram  \cite[Example 19.3(iii)]{Seidel08}, and especially \cite[Section III.20]{Seidel08}. The $k^{th}$ stabilization of the contact open book $\ob(T^*S^{n},\tau_{S^{n}})$ as described above yields the contact open book decomposition

$$(S^{2n+1},\xi_{\st})=\ob(A^{2n}_k,\p_k),\quad\forall k\in\N,$$

where $\p_k$ is the composition of one Dehn twist along each of the zero sections. Figure~\ref{AkFiber} depicts a Legendrian handlebody presentation for the Weinstein manifolds $A^{2n}_k$ \cite{CasalsMurphy}.

These contact open book decompositions correspond to the symplectic Milnor fibration for the $A_k$-isolated singularities
$$f_k:\C^{n+2}\lr\C: (z_1,\ldots,z_n)\lr z_1^{k+1}+z_2^2+\ldots+z_n^2,$$
the Weinstein manifold $A^{2n}_k$ being the Milnor fiber of the holomorphic map $f_k$ \cite{Arnold95,Giroux02}.\hfill$\Box$
\end{example}

\section{Contact push-offs and formal isotopy classes}\label{sec:pushoff}

Transverse push-offs of 1-dimensional Legendrian knots $\La\sse(S^3,\xi_\st)$, \cite[Section 2.9]{Etnyre05} and \cite[Section 3.1]{Geiges08}, are central in the study of transverse and Legendrian knot invariants \cite{HarveyKawamuroPlamenevskaya09,OzsvathSzaboThurston08}. In this section we introduce the notion of a contact push-off of a Legendrian submanifold in all higher-dimensions, which gives rise to a wealth of codimension-2 isocontact embeddings in a given contact manifold.

\subsection{The contact push-off of Legendrian submanifolds}\label{ssec:contactpushoff}

Let $(Y,\xi)$ be a contact manifold and $\La$ a smooth Legendrian submanifold. By the Weinstein neighbourhood theorem \cite[Proposition 2.5.5]{Geiges08} there exists an open set $\Op(\La)\sse(Y,\xi)$ containing $\La$ and a contactomorphism
$$(\Op(\La),\xi|_{\Op(\La)})\cong(J^1\La,\xi_\std),$$
which identifies $\La\sse\Op(\La)$ with the zero section $\La\times\{0\}$ in the 1-jet space $J^1\La=T^*\La\times\R$. Here $\xi_\std$ on $J^1\La$ is given as the kernel of $dz-\lambda_\st$ where $z$ is the coordinate on $\R$ and $\lambda_\st$ is the canonical 1-form on the cotangent bundle $T^*\La$. 

Let $D^*(\La)$ denote the unit disk bundle in the cotangent bundle of $\La$, for an arbitrary fixed Riemannian metric on $\La$. The hypersurface $(D^*(\La),\la_\st)\sse (J^1(\La),\xi_\st)$ is a Weinstein hypersurface of the contact manifold $J^1(\La)$, as defined in Subsection \ref{ssec:WeinsteinHyper}, and thus its boundary $(\dd(D^*\La),\la_\st|_{\dd(D^*\La)})$, is a contact submanifold of $(J^1(\La),\xi_\st)$. We denote this codimension--2 contact submanifold by 
$$\tau(\La):=(\dd(D^*\La)\times\{0\},\la_\st|_{\dd(D^*\La)})\sse (J^1\La,\xi_\std),$$
and the Weinstein hypersurface $D^*\La$ will be referred to as a Weinstein neighborhood of the Legendrian $\La\sse(J^1(\La),\xi_\st)$. In order to ease notation, we will denote the disk bundle $D^*(\La)$ and its boundary $\dd D^*(\La)$, for any choice of metric, by $T^*(\La)$ and $\dd T^*(\La)$.

\begin{definition}\label{def:pushoff}
Let $(Y,\xi)$ be a contact manifold and $\La$ a Legendrian submanifold. The contact push-off of $\La$ in $(Y,\xi)$ is the image of the contact submanifold
$$\tau(\La)=(\dd(T^*\La)\times\{0\},\xi_\std)\sse (J^1\La,\xi_\std),$$
by a contactomorphism identifying a neighborhood of $\La$ with $J^1\La$:
$$(\Op(\La),\xi|_{\Op(\La)})\cong(J^1\La,\xi_\std).$$
The contact push-off is eponymously referred to as $\tau(\La)$.\hfill$\Box$
\end{definition}

Definition \ref{def:pushoff} in the 3-dimensional case asserts that the contact pushoff of a Legendrian knot is defined as the transverse link given by the disjoint union of the positive and the negative transverse push-offs \cite[Section 3.1]{Geiges08}.

\begin{remark}
The positive and negative transverse push-off of a 1-dimensional Legendrian knot $\La\sse(\R^3,\xi_\st)$ do not independently determine the Legendrian isotopy type of $\La$ \cite[Theorem 2.1]{EpsteinFuchsMeyer01}.\hfill$\Box$
\end{remark}

In line with this remark, we would like to include the following conjecture, which aims at underscoring the role of contact push-offs for higher-dimensional Legendrian topology:

\begin{conj}\label{conj:ribbon}
	Let $\La_0,\La_1\sse(S^{2n+1},\xi_\st)$ be two formally Legendrian isotopic smooth Legendrian submanifolds such that $\tau(\La_0)$ and $\tau(\La_1)$ are contact isotopic. Then $\La_0$ and $\La_1$ are Legendrian isotopic.\hfill$\Box$
\end{conj}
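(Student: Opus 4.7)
\textbf{Proof proposal (for Conjecture \ref{conj:ribbon}).} The plan is to recover each Legendrian $\La_i$ from its contact push-off $\tau(\La_i)$ by realizing it as the Legendrian lift of the zero section of a canonical Weinstein hypersurface filling of $\tau(\La_i)$ inside $(S^{2n+1},\xi_\st)$. By construction, $\tau(\La_i)=\dd T^*\La_i$ is the convex boundary of the Weinstein hypersurface $T^*\La_i\sse(S^{2n+1},\xi_\st)$, as discussed in Subsections~\ref{ssec:WeinsteinHyper} and \ref{ssec:contactpushoff}, and the zero section of $T^*\La_i$ is an exact Lagrangian whose Legendrian lift is precisely $\La_i$. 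Thus the conjecture reduces to showing that this ``standard'' Weinstein hypersurface filling of $\tau(\La_i)$ is determined, up to ambient contact isotopy rel $\tau(\La_i)$, by the pair $(\tau(\La_i),\text{formal class of }\La_i)$.

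First, I would establish a relative uniqueness statement for Weinstein hypersurface fillings: if $W_0,W_1\sse(S^{2n+1},\xi_\st)$ are Weinstein hypersurfaces diffeomorphic to $D^*\La$ with common convex boundary $\tau(\La)$, and if the Legendrian lifts of their zero sections are formally Legendrian isotopic in $(S^{2n+1},\xi_\std)$, then there is an ambient contact isotopy of $(S^{2n+1},\xi_\std)$ taking $W_0$ to $W_1$ and fixing $\tau(\La)$ setwise. The rough idea is to identify collar neighborhoods of $\tau(\La)$ inside $W_0$ and $W_1$ via a parametric version of the standard neighborhood theorem for Weinstein hypersurfaces (\cite[Lemma 3.1]{Avdek}), and then extend this identification into the interior using the formal isotopy to trivialize the obstructions to a Weinstein deformation between the interior Weinstein structures.

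Second, I would apply this relative uniqueness to the data of the conjecture. Let $\phi_t:(S^{2n+1},\xi_\std)\lr(S^{2n+1},\xi_\std)$ be an ambient contact isotopy realizing the contact isotopy $\tau(\La_0)\simeq\tau(\La_1)$; such an ambient extension exists by the Isotopy Extension Theorem \cite[Theorem 2.6.12]{Geiges08}. Then $\phi_1(T^*\La_0)$ is a Weinstein hypersurface with boundary $\tau(\La_1)$, whose zero section's Legendrian lift is $\phi_1(\La_0)$. Because $\La_0$ is formally Legendrian isotopic to $\La_1$, and $\phi_1$ is a contactomorphism (hence preserves the formal data), $\phi_1(\La_0)$ is formally Legendrian isotopic to $\La_1$. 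Invoking the relative uniqueness statement to $W_0=\phi_1(T^*\La_0)$ and $W_1=T^*\La_1$ yields an ambient contact isotopy $\psi_t$ fixing $\tau(\La_1)$ and sending $\phi_1(T^*\La_0)$ to $T^*\La_1$. The concatenation $\psi_t\circ\phi_t$ then gives a contact isotopy of $(S^{2n+1},\xi_\std)$ taking $T^*\La_0$ to $T^*\La_1$, and passing to Legendrian lifts of zero sections at each time yields a Legendrian isotopy from $\La_0$ to $\La_1$.

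The main obstacle is clearly the relative uniqueness of Weinstein hypersurface fillings in the first step. This is a codimension-$2$ analogue of the (notoriously delicate) uniqueness question for Weinstein fillings of closed contact manifolds, which is known to fail in general. The formal Legendrian isotopy hypothesis is the precise extra input that one hopes can tame the purely formal obstructions to a Weinstein deformation, but converting this formal equivalence into a genuine Weinstein isotopy rel boundary requires new input beyond what is presently available. A natural route would be to prove a parametric $h$-principle for Weinstein hypersurfaces in a fixed contact manifold relative to their convex boundary, or alternatively, to rule out exotic Weinstein fillings of $\tau(\La)$ by $T^*\La$ inside $(S^{2n+1},\xi_\std)$ via pseudo-holomorphic curve invariants such as symplectic cohomology (as used in Section~\ref{sec:mainproof} for Theorem~\ref{thm:main}). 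Either of these approaches is substantial in its own right, which is why the statement is recorded here as a conjecture rather than a theorem.
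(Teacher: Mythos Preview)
The paper does not prove this statement: it is explicitly recorded as a \emph{conjecture} (Conjecture~\ref{conj:ribbon}), with no accompanying argument. So there is no ``paper's own proof'' to compare against.

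Your proposal is likewise not a proof, and you correctly acknowledge this in your final paragraph. The outline you give --- recover $\La_i$ as the Legendrian lift of the zero section of a Weinstein hypersurface filling of $\tau(\La_i)$, then prove a relative uniqueness statement for such fillings --- is a reasonable articulation of what a proof would require, and it is in the spirit of the paper's framing of the conjecture as a higher-dimensional analogue of the question of whether the transverse push-offs determine the Legendrian. But the entire content of the conjecture is precisely the ``relative uniqueness of Weinstein hypersurface fillings'' step you isolate, and as you yourself note, this is a codimension-$2$ relative version of a problem that is already known to be subtle (and often false) in the absolute case. Neither the $h$-principle route nor the pseudo-holomorphic curve route you suggest is currently available at the required generality.

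In short: there is nothing to compare, since the paper offers no proof; your proposal is an honest sketch of a strategy whose central step remains open, which is exactly why the statement is a conjecture.
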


\subsection{Formal isotopy type} Let us address the formal contact isotopy type of contact push-offs. First, the formal contact structure of the contact push-off $\tau(\La)$ is induced by the almost complex, equivalently symplectic, structure of the Liouville hypersurface $(\La,\omega)$. Given a Legendrian isotopy $\{\La_t\}$, $t\in[0,1]$, with $\La_0=\La$, the almost complex structures on $(T^*\La_t,\omega_t)$ would produce in their boundaries a (formal) contact isotopy between $\tau(\La_0)$ and $\tau(\La_1)$.

In this article we are comparing Legendrians with respect to formal Legendrian isotopy, and thus we need the effect of a formal Legendrian isotopy on the formal contact isotopy type of the contact push-offs. The main technical lemma reads as follows: 

\begin{lemma}\label{lem:formaldetermined}
Let $\La_0,\La_1\sse(Y,\xi)$ be two Legendrian submanifolds. If $\Lambda_0$ and $\Lambda_1$ are formally Legendrian isotopic, then $\tau(\La_0)$ and $\tau(\La_1)$ are formally contact isotopic as isocontact embeddings. 
\end{lemma}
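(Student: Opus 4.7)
The plan is to lift the formal Legendrian isotopy to a one-parameter family of formal Weinstein tubular neighborhoods of the moving Legendrian, and then restrict each to the unit cotangent sphere bundle to obtain a formal contact isotopy of the push-offs. Fix a formal Legendrian isotopy $(f^t,F_s^t)_{s,t\in[0,1]}$ from $\La_0$ to $\La_1$, with $F_s^t$ constant in $s$ equal to $df^t$ at $t=0,1$, as is permitted since $\La_0,\La_1$ are genuine Legendrians.

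The core construction is a continuous family of formal open isocontact embeddings
$$(\Phi^t,G_s^t)\colon(J^1\La,\xi_\st)\lr(Y,\xi)$$
taking the zero section to $f^t(\La)$ and specializing at $t\in\{0,1\}$ to the genuine Weinstein neighborhoods of $\La_0$ and $\La_1$. For the underlying smooth embedding $\Phi^t$, the subbundle $F_1^t(T\La)\sse(f^t)^*\xi$ is Lagrangian, and its conformal symplectic complement in $\xi$ together with a Reeb complement identify the smooth normal bundle of $f^t(\La)$ in $Y$ with $T^*\La\oplus\R$, canonically up to contractible choice; combined with a smoothly varying choice of smooth tubular neighborhood of $f^t(\La)$, this produces the family of smooth embeddings $\Phi^t$, chosen to agree with the genuine Weinstein neighborhoods at the endpoints.

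For the bundle homotopy $G_s^t$ it suffices to construct it along the zero section and extend to a neighborhood by a standard parametric argument. Along the zero section, write $TJ^1\La=T\La\oplus T^*\La\oplus\R$, set $G_s^t|_{T\La}=F_s^t$, and on the summand $T^*\La\oplus\R$ interpolate from $d\Phi^t$ at $s=0$ to the conformally symplectic identification induced by $F_1^t$ and the Reeb complement at $s=1$. Contractibility of the space of Lagrangian complements in a symplectic vector bundle and of the space of compatible almost complex structures ensures that this interpolation exists continuously in $(s,t)$ and can be taken constant in $s$ at $t\in\{0,1\}$.

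Setting $i^t=\Phi^t\big|_{\dd T^*\La}$ and $H_s^t=G_s^t\big|_{T(\dd T^*\La)}$, the pair $(i^t,H_s^t)$ is by construction a formal isocontact embedding of the abstract manifold $\tau(\La_0)\cong\dd T^*\La$ into $(Y,\xi)$ depending continuously on $t\in[0,1]$, reducing to the genuine contact push-offs at $t=0,1$, and therefore providing the desired formal contact isotopy between $\tau(\La_0)$ and $\tau(\La_1)$. The main technical point lies in producing the formal Weinstein neighborhood $(\Phi^t,G_s^t)$ continuously in both $s$ and $t$ while matching the genuine Weinstein neighborhoods at the endpoints; this rests on the parametric contractibility arguments above rather than a pointwise construction.
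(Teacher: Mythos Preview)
Your argument is correct and morally close to the paper's, but the packaging differs in a way worth noting. The paper defines the formal contact push-off of a formal Legendrian $(f,F_s)$ by choosing automorphisms $G_s$ of $TY$ with $G_s\circ F_0=F_s$ and using them to transport the contact structure $(\xi,d\alpha)$ to a family of formal contact structures $(\xi_s,\tau_s)$ on $Y$; in $(\xi_0,\tau_0)$ the genuine differential $df=F_0$ is Lagrangian, so the unit normal bundle of $F_0(T\La)$ inside $(\xi_0,\tau_0)$ inherits a formal contact structure, and the $s$-family of ambient formal structures then carries this back to a formal isocontact embedding into $(Y,\xi)$. In other words, the paper moves the target structure so that the honest tangent map becomes Lagrangian, whereas you keep $(Y,\xi)$ fixed and instead build a formal isocontact embedding of the entire model $(J^1\La,\xi_\st)$ as a ``formal Weinstein neighborhood'' and restrict to its boundary. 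These are dual viewpoints resting on the same contractibility facts (Lagrangian complements, compatible almost complex structures), and each yields the lemma; your version is somewhat more explicit about the underlying smooth embedding $\Phi^t$, while the paper's version is more concise and avoids extending anything off the zero section.
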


The most direct proof for Lemma \ref{lem:formaldetermined} is to define the formal contact push-off of a formal Legendrian $(f,\{F_s\})$, extending Definition \ref{def:pushoff} to formal Legendrian embeddings. Lemma \ref{lem:formaldetermined} then follows from the fact that the formal Legendrian type of $(f,F_s)$ determines the formal contact type of $\tau((f,F_s))$. Let $(Y,\xi)$ be a $(2n+1)$-dimensional contact manifold with a choice of contact form $\alpha$ and a Riemannian metric $g$. Let $(\La,f,F_s)$, $s\in[0,1]$, be a formal Legendrian embedding into $(Y,\xi,\tau)$, we construct the formal contact push-off as follows.

Consider a family of automorphisms $\{G_s\}_{s\in[0,1]}$, $G_s:TY\lr TY$, such that $G_s\circ F_0=F_s$ as a family of monomorphisms $T\La\lr TY$. Consider the formal contact structures $(\xi_s,\tau_s)_{s\in[0,1]}$ defined by $(\xi_1,\tau_1)=(\xi,d\alpha)$, $\xi_s=(G_1^{-1}\circ G_s)(\xi_1)$ and $\tau_s=\tau_1\circ G_1\circ G_s^{-1}$. Then the image $F_0(T\La)$ is now contained in $\xi_0$, and it is a Lagrangian subbundle with respect to the conformally symplectic structure $(\xi_0,\tau_0)$.

The formal contact push-off $\tau(\La,f,F_s)$ of the formal Legendrian embedding $(\La,f,F_s)$ is defined to be the unit normal bundle of $F_0(T\La)\sse(\xi_0,\tau_0)$ endowed with the formal contact structure $(\Xi,\tau|_\Xi)$, where $\Xi=\langle\nu_1,\nu_2\rangle^{\perp_{\tau_0}}$, with $\nu_1\in\Gamma(\xi_0)$ a vector transverse to $\tau(\La,f,F_s)$ and $\nu_2\in \Gamma(T(\tau(\La,f,F_s)))$ is such that $\langle\nu_1,\nu_2\rangle$ is a symplectic subspace of $(\xi_0,\tau_0)$. By construction, this yields a formal isocontact embedding of $\tau(\La,f,F_s)$ into the formal contact manifold $(Y,\xi_0,\tau_0)$. Note that the $s$-parametric nature of the construction, the formal isocontact embedding $j_0:\tau(\La,f,F_s)\lr(Y,\xi_0,\tau_0)$ extends to a family of formal isocontact embeddings $j_s:\tau(\La,f,F_s)\lr(Y,\xi_s,\tau_s)$, $s\in[0,1]$.

\begin{proof}[Proof of Lemma \ref{lem:formaldetermined}] Consider a formal Legendrian isotopy $(f_t,F_{s,t})$, $s,t\in[0,1]$, between $\La_0$ and $\La_1$. The formal contact pushoffs $\tau(f_t,F_{s,t})$ yield a path of formal contact submanifolds into the formal contact manifold $(Y,\xi)$ which interpolate ambiently between the formal contact type of $\tau(\La_0)$ and the formal contact type of $\tau(\La_1)$, thus proving that $\tau(\La_0)$ and $\tau(\La_1)$ are formally contact isotopic isocontact embeddings.
\end{proof}

\subsection{Singular Legendrians} In this section, we use singular Legendrian submanifolds to construct further isocontact embeddings of codimension-2 smooth contact manifolds. Such isocontact embeddings are obtained by generalizing Definition \ref{def:pushoff} to a class of singular Legendrians $\La\sse(Y,\xi)$. This allows us to prove Theorem~\ref{cor:infinitelymany} and emphasize the generality in which our methods apply. The reader interested in the proof of Theorem \ref{thm:main} can restrict themselves to smooth Legendrians and proceed to Section \ref{sec:covers} on the first reading.

\begin{remark} Recent advances in symplectic topology \cite{Fukaya02,Nadler17,Starkston}, in combination with the compactness theory of integral currents \cite{FedererFleming60,White89}, strongly indicate that there should exist a Floer theory with singular boundary conditions. In particular, singular Legendrian submanifolds in contact topology should potentially be on equal footing with smooth Legendrians. This article hopefully begins to illustrate this.\hfill$\Box$
\end{remark}

Let $(W,\la,\p)$ be a $2n$-dimensional Weinstein manifold \cite[Section 1.12]{EliashbergGromov91} and $v$ its Liouville vector field, defined by the equation $\iota_v d\lambda = \lambda$. Consider the time-$t$ flow $\phi_t:W\lr W$ of the Liouville vector field, and its skeleton $\Sk(W)=\Sk(W,\la)\sse (W,\la)$, defined as the intersection 
$$\Sk(W) = \cap_{t\in (0, \infty)} \phi_{-t}(W).$$
The existence of the adapted plurisubharmonic function $\p\in C^\infty(W)$ shows that the skeleton $\Sk(W)$ is an isotropic complex \cite[Chapter 11]{CieliebakEliashberg12}, but it can be readily thickened to a Lagrangian CW complex \cite[Section 2.1]{Starkston}, and see also \cite[Section 1]{Eliashberg18}.

\begin{definition}\label{def:generalcontactpushoff}
Let $(Y,\xi)$ be a $(2n+1)$-dimensional contact manifold and $(W,\la,\p)\sse(Y,\xi)$ a Weinstein hypersurface. The contact submanifold $\tau(W)=\dd(W,\la,\p)\sse(Y,\xi)$ will be called the contact push-off of the Legendrian lift $\La(W)\sse(Y,\xi)$ of the exact Lagrangian skeleton $\Sk(W)\sse(W,\la,\p)$. The contact manifold $\tau(W)$ will also be referred to as the contact push-off of $(W,\la)$.\hfill$\Box$
\end{definition}

In Subsection \ref{ssec:contactpushoff} we have discussed the case in which $(W,\la,\p)\cong\Op(\La)$ where $\La\sse(Y,\xi)$ is a smooth Legendrian submanifold. Then Definition \ref{def:generalcontactpushoff} yields a smooth Lagrangian skeleton and recovers Definition \ref{def:pushoff}.

\begin{remark}
The emphasis on the skeleton $\La(W)$ in Definition \ref{def:generalcontactpushoff}, instead of just considering $(W,\la)$, is meant to align Definitions \ref{def:pushoff} and \ref{def:generalcontactpushoff} and stress the connection between Legendrian submanifolds and isocontact embeddings.
\end{remark}

Instead of generalizing Lemma \ref{lem:formaldetermined}, we shall restrict ourselves to the case where the lemma can be directly applied, as follows. Let $\La\sse(Y,\xi)$ a singular Legendrian submanifold, following Definition \ref{def:legstab} of a stabilization of a smooth Legendrian \cite{EkholmEtnyreSullivan05a,Murphy??}, we define the Legendrian stabilization of $\La$ as the Legendrian submanifold $s(\La)\sse (Y,\xi)$ obtained by performing one stabilization along each of the smooth top-dimensional strata of $\La$. This is well-defined since stabilization is a local operation. The $h$-principle in \cite{Murphy??} can be adapted to this singular setting and the result of stabilizing a singular Legendrian yields the unique loose representative in the formal Legendrian isotopy class.

\begin{lemma}\label{lem:singpushoff}
Let $(Y,\xi)$ be a contact manifold, $(W,\la)\sse(Y,\xi)$ a Weinstein hypersurface and $p\in\Sk(W,\la)$ a smooth point. There exists a Weinstein hypersurface $(W_p,\la_p)\sse(Y,\xi)$, homotopic to $(W,\la)$ as a formal symplectic hypersurface, such that the Legendrian lift of the skeleton $\Sk(W_p,\la_p)$ is obtained from $\Sk(W,\la)$ by a stabilization of the Legendrian lift of $\Sk(W,\la)$ along the lift of the smooth point $p$.
\end{lemma}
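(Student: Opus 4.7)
The plan is to work locally in a Darboux chart around the Legendrian lift of $p$ in $(Y,\xi)$, where the Weinstein hypersurface $W$ restricts to a standard Weinstein neighborhood of a smooth Legendrian disk and where the Legendrian lift of the skeleton restricts to the zero section. There I will perform the $M$-stabilization of Definition \ref{def:legstab} on this smooth local Legendrian, then take its standard Weinstein tubular neighborhood to define $W_p$ inside the chart, leaving $W$ unchanged outside.

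In more detail, since $p \in \Sk(W,\la)$ is a smooth point, a neighborhood of $p$ in the skeleton is an $n$-dimensional Lagrangian disk $D \sse (W,\la)$, and by the Weinstein neighborhood theorem a Weinstein neighborhood of $D$ in $W$ is Weinstein isomorphic to $(T^*D,\la_{\st})$. Applying the standard contact collar $(\Sigma \times [-\e,\e],\ker(\a-dt))$ of Subsection~\ref{ssec:WeinsteinHyper} together with the Legendrian neighborhood theorem, a contact neighborhood of the Legendrian lift of $D$ in $(Y,\xi)$ is contactomorphic to an open set in $(J^1 D,\xi_{\st})$, under which $W$ is identified with $T^*D \times \{0\}$ and the lift of $D$ is identified with the zero section. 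Inside this Darboux chart, I perform an $M$-stabilization of the zero section along a small bounded submanifold $M \sse D$ localized at the lift of $p$, producing a smooth Legendrian $D' \sse J^1D$ that agrees with the zero section outside a compactly supported piece of the chart. A second application of the Weinstein neighborhood theorem furnishes a standard Weinstein tubular neighborhood of $D'$ inside $J^1D$, isomorphic to $(T^*D',\la_{\st})$. I then define $(W_p,\la_p)$ by replacing $W \cap \text{chart} = T^*D \times \{0\}$ with this new Weinstein tubular neighborhood of $D'$, and setting $W_p = W$ outside the chart. By construction the Legendrian lift of $\Sk(W_p,\la_p)$ agrees with the Legendrian lift of $\Sk(W,\la)$ outside the chart and with $D' = s_M(D)$ inside it, and is therefore the stabilization of the Legendrian lift of $\Sk(W,\la)$ along the lift of $p$.

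For the formal homotopy, Lemma \ref{lem:formallegisotopy} gives that $D$ and $D'$ are formally Legendrian isotopic in the Darboux chart, and this formal Legendrian isotopy lifts to a path of formal symplectic embeddings of their standard Weinstein tubular neighborhoods, compactly supported inside the chart. This produces the desired formal symplectic homotopy between $W$ and $W_p$ inside $(Y,\xi)$. The principal technical obstacle I foresee is verifying that the local replacement is a \emph{genuine} Weinstein hypersurface whose Liouville structure glues smoothly to the unchanged part of $W$ across the boundary of the Darboux chart; this should follow from the fact that both the stabilization operation and the standard Weinstein tubular neighborhood construction are compactly supported modifications that coincide with the original data off a small set in the chart, together with the contactization model for Weinstein hypersurfaces recalled in Subsection~\ref{ssec:WeinsteinHyper}.
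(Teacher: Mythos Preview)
Your proof is correct and takes essentially the same approach as the paper's: both replace the Weinstein neighborhood of the smooth Legendrian disk near $p$ by the Weinstein neighborhood of its stabilization, and deduce the formal symplectic homotopy from the formal Legendrian isotopy via the argument of Lemma~\ref{lem:formaldetermined}. The paper phrases the local stabilization as a Legendrian connected sum with a disjoint stabilized unknot $s(\Lambda_0)$ and uses the basin of attraction $A(p)$ to delimit the modified region, whereas you invoke the $M$-stabilization and a Darboux chart directly, but this difference is cosmetic.
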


\begin{proof}
The Weinstein hypersurface $(W_p,\la_p)\sse(Y,\xi)$ is constructed in two pieces. The first piece is the complement $W\setminus A(p)$ of the stable manifold, i.e. the complement the basin of attraction $A(p)$ of a small smooth neighborhood $\Op\{p\}\sse\Sk(W,\la)$, for the Liouville dynamical system defined by the Liouville field $X_\la$. Thus $(W_p,\la_p)$ is defined to coincide with $W\setminus A(p)$, in the domain of definition of the later. The second piece is the Weinstein neighborhood of the Legendrian connected sum of the Legendrian lift of $\Op(p)\sse\Sk(W,\la)$ with a (disjoint) stabilized Legendrian unknot $s(\La_0)$ embedded in a contact Darboux chart around $p\in(Y,\xi)$. Since both pieces coincide along the germs of their boundaries, they glue together to define $(W_p,\la_p)$ \cite{Eliashberg18}.

Since the stabilization of a Legendrian is formally Legendrian homotopic relative to the boundary of a small contact neighborhood \cite{EkholmEtnyreSullivan05a}, the argument in Lemma \ref{lem:formaldetermined} above shows that the Weinstein neighborhood of the stabilized Legendrian is formally homotopic, relative to the boundary and as a symplectic hypersurface, to that of the non-stabilized Legendrian. This implies that $(W_p,\la_p)$ is formally homotopic \cite{EliashbergMishachev02}, as a symplectic hypersurface, to $(W,\la)$, even relative to their common intersection in the complement $W\setminus A(p)$ of the region being stabilized.
\end{proof}

In Definition \ref{def:generalcontactpushoff} we introduced the notion of the contact push-off of a singular Legendrian, where in this singular case we always assume that the Legendrian is being presented as the lift of the Lagrangian skeleton of a given Weinstein hypersurface. In this situation, Lemma \ref{lem:singpushoff} allows us to talk about the contact push-off of the stabilization of a given singular Legendrian. This construction will be used in the proof of Corollary \ref{conj:infinite}. Let us end this section with an example.

The theory of arboreal singularities \cite{Nadler17,Starkston} provides many interesting examples of singular Legendrians $\La\sse(\R^{2n+1},\xi_\st)$, arising as the Legendrian boundaries of singular Lagrangian arboreal skeleta \cite[Section 2.4]{Starkston}. Arboreal singularities can more directly be described by using the following general class of singular Legendrians.

\begin{example}
Let $f:\C^{n+1}\lr\C$ be a holomorphic polynomial and let
$$(S^{2n+1},\xi_\st)=\ob((W_f,\la_f),\tau_f)$$
be the open book associated to the Milnor fibration of $f$ \cite[Section A]{Giroux02}. The set-theoretical union $L_f\sse(W_f,\la_f)$ of the Lagrangian vanishing cycles \cite{Seidel08} constitute an exact Lagrangian skeleton of the Weinstein page $(W_f,\la_f)$, and thus the Legendrian lift $\La(L_f)\sse (S^{2n+1},\xi_\st)$ defines a singular Legendrian. In this case $\tau(L_f)$ coincides with the contact binding
$$(\dd W_f,\la_f|_{\dd W_f})$$
of the adapted contact open book \cite[Section 4]{Colin08}. For instance, a simple class of singular Legendrians is obtained in this manner by considering stabilizations of plane curve singularities $f:\C\lr\C$ to $f:\C^{n+1}\lr\C$, in which case the closures of the smooth strata of the Legendrian skeleta $\La(L_f)$ are $S^n$-spheres, and these smooth strata meet only along ordinary double points, thus topologically forming a plumbing graph of $n$-dimensional spheres.\hfill$\Box$
\end{example}

\section{Contact surgery presentations of contact cyclic branched covers}\label{sec:covers}

Let $(Y,\xi)$ be a contact manifold, $\La\sse(Y,\xi)$ a Legendrian submanifold, possibly singular, and $\tau(\La)$ its smooth contact push-off. 
The contact type of the contact branch covers of $\La\sse(Y,\xi)$ along $\tau(\La)$ is the invariant used to prove Theorem~\ref{thm:main} and Theorem~\ref{cor:infinitelymany}.

The main goal of this section is to construct a contact surgery presentation of the contact $n$-fold cyclic branched covers of $(Y,\xi)$ along $\tau(\La)$, $(C_n(\tau(\La)),\xi_n(\tau(\La)))$. In the $3$-dimensional case, the articles \cite{HarveyKawamuroPlamenevskaya09,Plamenevskaya06a} discuss contact surgery presentations for contact branched covers along transverse knots, and the unpublished work \cite[Section 7]{Avdek} contains part of the ideas we develop in this section.

The techniques developed in Subsection \ref{ssec:contactcyclicsurgery} below provide a contact surgery presentation for $$(C_n(\tau(\La)),\xi_n(\tau(\La)))$$
in all dimensions and for all $n\in\N$. In fact, any $n\geq2$ can be used to prove Theorems~\ref{thm:main} and~\ref{conj:infinite}. For simplicity, we will only discuss the applications in the case of the cyclic $2$-fold branched covers. The main result from this section, proven in Subsection~\ref{ssec:proofmain} and used in the proof of Theorem~\ref{thm:main}, is the following result.

\begin{thm}\label{thm:contactsurgery}
Let $\La\sse(S^{2n+1},\xi_\std)$ be a Legendrian sphere and $\tau(\La)$ its contact push-off. The contact double branched cover of $(S^{2n+1},\xi_\std)$ along $\tau(\La)$ admits a Weinstein filling obtained by attaching a Weinstein $(n+1)$--handle to $(D^{2n+2},\la_\st)$ along the Legendrian connected sum $\La\#\La$ in $(S^{2n+1},\xi_\std)$. 
\end{thm}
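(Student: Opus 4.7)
The plan is to realize the contact double branched cover of $(S^{2n+1},\xi_\std)$ along $\tau(\La)$ as the Weinstein hypersurface sum of two copies of $(S^{2n+1},\xi_\std)$ along two copies of the Weinstein hypersurface $T^*\La$, and then read off a Weinstein filling using Proposition \ref{prop:weinsteinsumcob} together with Construction \ref{con}. The smooth identification is the classical Seifert-type picture: cut two copies of $S^{2n+1}$ along the hypersurface $T^*\La$ and reglue the resulting boundary pieces by the swap. To promote this to a contact identification, one compares the local model near the branch locus $\tau(\La)$ produced by the perturbation in Theorem \ref{bccontact} with the local model produced by the convex gluing along $\dd\Op(T^*\La)$ used in the Weinstein sum; the uniqueness clause of Theorem \ref{bccontact} then yields the global contactomorphism.

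Next, Proposition \ref{prop:weinsteinsumcob} produces a Weinstein cobordism from $(S^{2n+1},\xi_\std)\sqcup(S^{2n+1},\xi_\std)$ to the branched cover. Applying Construction \ref{con} to the minimal Weinstein handle decomposition of $T^*S^n$ (with one $0$-handle and one $n$-handle) produces exactly two Weinstein handles in the cobordism: a Weinstein $1$-handle, attached at a pair of points $p_0\in\La_0$ and $p_1\in\La_1$ obtained as the images of the core of the $0$-handle of $T^*S^n$ on the two copies of $\La$, and a Weinstein $(n+1)$-handle. Stacking this cobordism on top of two copies of $(D^{2n+2},\la_\st)$, and observing that two Weinstein balls joined by a Weinstein $1$-handle form a single $(D^{2n+2},\la_\st)$, yields a Weinstein filling of the branched cover consisting of $(D^{2n+2},\la_\st)$ together with a single Weinstein $(n+1)$-handle attached along a Legendrian $n$-sphere in $(S^{2n+1},\xi_\std)$.

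The main step is then to identify this attaching Legendrian sphere with the Legendrian connected sum $\La\#\La$. By Construction \ref{con}, the attaching sphere is built from the two cores $D^n$ of the $n$-handle of $T^*S^n$, each sitting in $\La_i$ as the complement of a small disk around $p_i$, glued along the isotropic annulus $D^1\times S^{n-1}$ on the side of the Weinstein $1$-handle. Choosing $p_0$ and $p_1$ to lie on cusp edges of the two fronts, which can be arranged by a preliminary contact isotopy in a Darboux chart, promotes the core arc $\gamma$ of the $1$-handle to an isotropic arc satisfying the hypotheses of Definition \ref{def:legsum}, and the standard tubular neighborhood of $\gamma$ has this annulus $D^1\times S^{n-1}$ as its boundary. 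The description of the attaching sphere then matches verbatim the front-level recipe for $\La\#_\gamma\La$ in Definition \ref{def:legsum}: remove the neighborhoods $\Op(\gamma\cap\La_i)\cap\La_i$ of the cusp-edge intersection points and concatenate with the boundary of the standard tubular neighborhood of $\gamma$.

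The main obstacle I anticipate is the first step: while the smooth branched-cover picture and the Weinstein sum picture are intuitively the same, and the three-dimensional case is essentially contained in \cite{Avdek} and \cite{HarveyKawamuroPlamenevskaya09,Plamenevskaya06a}, verifying that the two contact structures agree in higher dimensions requires a careful comparison of the perturbed contact form of Theorem \ref{bccontact} near $\tau(\La)$ with the canonical normal form of a Weinstein hypersurface neighborhood recalled in Subsection \ref{ssec:WeinsteinHyper}, and a check that the deck involution of the branched cover intertwines with the swap used in the convex gluing.
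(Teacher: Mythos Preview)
Your proposal is correct and follows essentially the same route as the paper: identify the double branched cover with the Weinstein sum of two copies of $(S^{2n+1},\xi_\std)$ along $T^*\La$, apply Proposition~\ref{prop:weinsteinsumcob} and Construction~\ref{con} to the two-handle decomposition of $T^*S^n$, fill the concave end by two balls, and cancel a $0$-/$1$-handle pair.

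The one substantive difference is in how you establish the first step. You propose a direct local comparison near $\tau(\La)$ between the perturbed contact form of Theorem~\ref{bccontact} and the convex-gluing model, invoking the uniqueness clause of Theorem~\ref{bccontact}; you correctly flag this as the delicate point. The paper avoids this comparison entirely by routing through Proposition~\ref{prop:cycliccoverasWeinsteinsum} (and Corollary~\ref{cor:neededdescription}): one exhibits the $\Z_2$-action on $\ob((T^*\La,\la_\st),\id)$ by rotation of pages, observes that its fixed set is the binding $\tau(\La)$ and its quotient is again $\ob((T^*\La,\la_\st),\id)$, and then Weinstein-sums copies of $(S^{2n+1},\xi_\std)$ onto equidistant pages. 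The final observation that summing $(S^{2n+1},\xi_\std)$ to $\ob((T^*\La,\la_\st),\id)$ along one page returns $(S^{2n+1},\xi_\std)$ gives the desired identification without ever inspecting the branched-cover perturbation. This open-book argument is both shorter and more robust than the local-model matching you outline, and it is what makes the ``obstacle'' you anticipate disappear.
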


Theorem~\ref{thm:contactsurgery} also can be interpreted as a statement about the existence of a contact surgery presentation of the contact double branched cover; more specifically the contact double branched cover is obtained by ($-1$)-Legendrian surgery on the Legendrian connected sum $\La\#\La$. 

The statement of Theorem~\ref{thm:contactsurgery} implicitly identifies $(S^{2n+1},\xi_\std)$ as the contact boundary of $(D^{2n+2},\la_\st)$.  Theorem~\ref{thm:contactsurgery} is optimally absorbed with a visual diagram, as in the following example.

\begin{example}\label{ex:Z0contactsurgery}

Consider a Legendrian sphere $\La\sse(S^{2n+1},\xi_\std)$, we can assume after higher-dimensional Reidemeister moves \cite[Section 2]{CasalsMurphy} that its Legendrian front diagram has the form shown in the upper left corner of Figure~\ref{fig:Z0ContactSurgery}. Theorem \ref{thm:contactsurgery} states that the contact double branched cover of $(S^{2n+1},\xi_\std)$ along $\tau(\La)$ bounds the Weinstein manifold obtained by attaching a Weinstein $(n+1)$--handle to the Legendrian sphere in the upper right of Figure~\ref{fig:Z0ContactSurgery}

\begin{figure}[ht]{\tiny
\begin{overpic}
{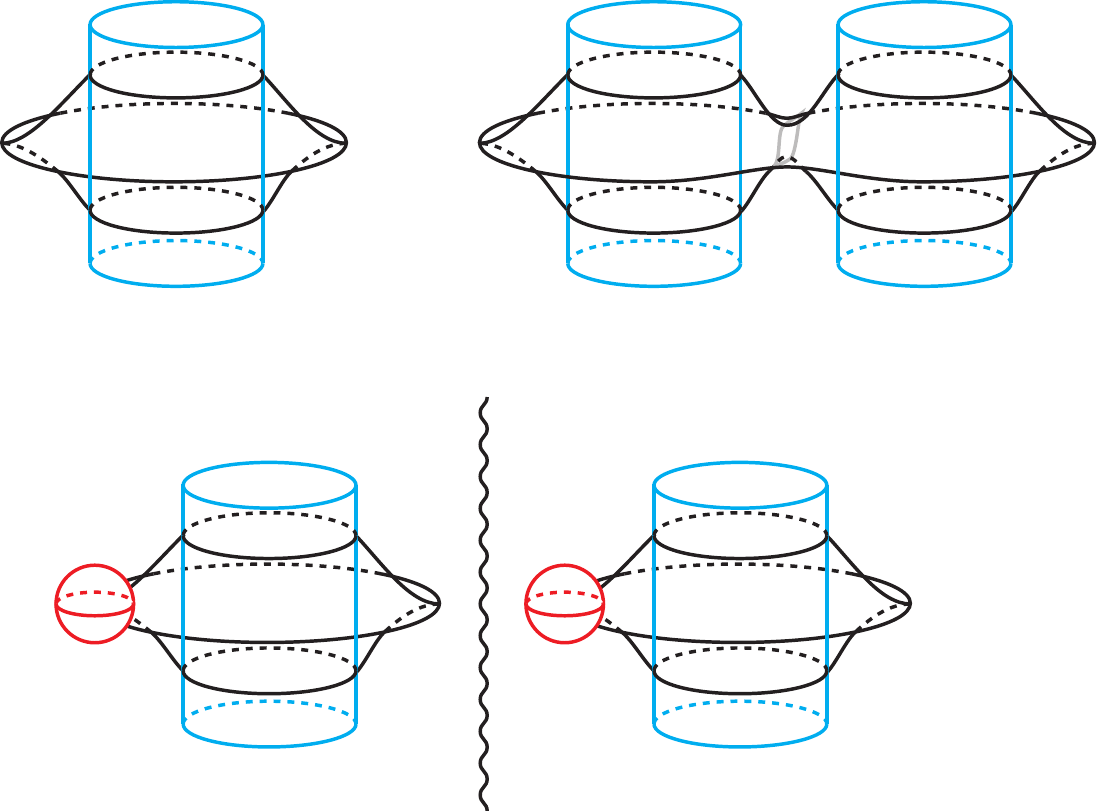}
\end{overpic}}
\caption{The Legendrian knot $\La$ is depicted in the upper left corner, where the blue cylinder is any front diagram for a Legendrian tangle. The upper right corner is a Legendrian handlebody presentation of a Weinstein filling for the contact double branched cover of $(S^{2n+1},\xi_\std)$ along the contact push-off $\tau(\La)$. The lower row depicts the same Weinstein filling with the natural Weinstein structure coming from our construction in Subsection~\ref{ssec:contactcyclicsurgery}. The lower row simplifies to the upper right diagram via canceling Weinstein handles, as we explain in the proof of Theorem~\ref{thm:contactsurgery}. In short, the blue cylinder represents a Legendrian tangle, the red sphere indicates the index-1 surgery and the wave vertical segment in the lower row indicates two disjoint regions for Weinstein front diagrams.}
\label{fig:Z0ContactSurgery}
\end{figure}

For instance, in the case of the Legendrian unknot $\La=\La_0$ we obtain that the double branched cover of $(S^{2n+1},\xi_\std)$ along the contact submanifold $(\tau(\La_0),\xi_{\st})=(\dd(T^*S^n),\xi_{\st})$ is $(\dd(T^*S^{n+1}),\xi_{\st})$. Indeed, this also can be proven by using that the algebraic double branched cover of $\C^{n+1}$ along the affine conic hypersurface $Q^n\sse \C^{n+1}$ is affine isomorphic to the affine conic $Q^{n+1}\sse \C^{n+2}$, and restricting to the contact boundaries at infinity \cite{CasalsMurphy}.\hfill$\Box$
\end{example}

The techniques developed in this section apply, in particular, to Weinstein handlebody diagrams and our statements on contact branched covers along the contact boundaries of Weinstein hypersurfaces \cite{CieliebakEliashberg12,Eliashberg18} can be readily used to obtain new results for Weinstein manifolds, such as the following.

\begin{cor}\label{cor:flexiblebranch}
Let $(W,\la)$ be a flexible Weinstein manifold, $\La\sse(\dd W,\la|_{\dd W})$ a loose Legendrian submanifold and $(R(\La),\la_\st)\sse (W,\la)$ its Weinstein neighborhood. The contact cyclic branched cover of $\partial W$ along $\partial R(\La)$ has a flexible Weinstein filling. \hfill$\Box$
\end{cor}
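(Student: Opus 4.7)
The plan is to derive Corollary \ref{cor:flexiblebranch} from a direct generalization of Theorem \ref{thm:contactsurgery}, combined with the closure of looseness under Legendrian connected sum recalled at the end of Subsection \ref{sssec:legsum}.

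First, I would observe that the construction of Subsection \ref{ssec:contactcyclicsurgery} underlying Theorem \ref{thm:contactsurgery} is local along the Weinstein hypersurface $R(\La)$: it relies on the Weinstein hypersurface sum together with its cobordism description in Proposition \ref{prop:weinsteinsumcob}, both of which only depend on a standard contact neighborhood of $R(\La)\sse\dd W$. Consequently, the argument adapts verbatim from the case $W=D^{2n+2}$ of Theorem \ref{thm:contactsurgery} to an arbitrary Weinstein filling $(W,\la)$ of $(\dd W,\la|_{\dd W})$, and produces a Weinstein filling of the contact cyclic branched cover of $\dd W$ along $\dd R(\La)$ obtained from $(W,\la)$ by attaching a collection of Weinstein handles: the critical ones are attached along iterated Legendrian connected sums of disjoint copies of $\La$ inside $\dd W$, while the lower-index handles arise from the subcritical handles of $R(\La)\cong T^*\La$ in one-to-one correspondence with Construction \ref{con}.

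Second, I would verify that these new critical attaching Legendrians are all loose. By hypothesis $\La\sse\dd W$ is loose, and the iterated connected sums are performed inside pairwise disjoint Darboux balls joined by isotropic arcs, so the observation at the end of Subsection \ref{sssec:legsum} (which invokes the $h$-principle for loose Legendrians) applied inductively shows that every such iterated connected sum is itself a loose Legendrian. Combined with the flexibility of $(W,\la)$, which furnishes a Weinstein handlebody decomposition whose critical handles are attached along loose Legendrians, concatenation with the handle decomposition produced in the first step yields a Weinstein handlebody decomposition of the filling of the branched cover in which every critical handle is attached along a loose Legendrian. By definition, this filling is flexible.

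The one technical point requiring care is the locality assertion in the first step, namely checking that the contact surgery diagram of Section \ref{sec:covers} interacts with the given filling only through a neighborhood of $R(\La)$. Once this locality is made precise by inspecting the construction, the remaining two steps are automatic from the results already collected in Subsection \ref{sssec:legsum} and from the definition of a flexible Weinstein manifold.
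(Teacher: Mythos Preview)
Your proposal is correct and follows the approach the paper implicitly intends: the corollary is stated without proof (with a terminal $\Box$), and the surrounding text indicates it is meant to follow directly from the techniques of Section~\ref{sec:covers}, namely Proposition~\ref{prop:cycliccoverasWeinsteinsum}/Corollary~\ref{cor:neededdescription} together with the explicit handle description in Proposition~\ref{prop:weinsteinsumcob} and Construction~\ref{con}, combined with the looseness-under-connected-sum observation at the end of Subsection~\ref{sssec:legsum}. That is exactly what you do.

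One small imprecision worth tightening: your phrase ``iterated Legendrian connected sums of disjoint copies of $\La$'' is only literally accurate when $\La$ is a sphere (the setting of Theorem~\ref{thm:contactsurgery}). For a general Legendrian $\La$, the critical attaching spheres produced by Construction~\ref{con} are Legendrian $n$--spheres built from two copies of the core disk of a top-index handle of $T^*\La$ joined by an isotropic cylinder, not copies of $\La$ itself. The conclusion is unaffected, since these spheres still contain an open piece of $\La$ and hence a loose chart; but you should phrase the looseness argument in terms of that local inclusion rather than via the connected-sum lemma, which as stated in Subsection~\ref{sssec:legsum} applies to Legendrians in disjoint Darboux balls. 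Similarly, the filling is more accurately described as several copies of $(W,\la)$ joined by the cobordism handles, rather than a single $(W,\la)$ with handles attached; once the subcritical $1$--handles are in place these descriptions agree, but saying so explicitly would strengthen the write-up.
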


The proof of Theorem \ref{thm:contactsurgery} is given in Subsection \ref{ssec:proofmain} as a consequence of the mechanism that describes a Weinstein handle presentation for a cobordism from copies of a contact manifold to its cyclic branched covers, which we now introduce in the next subsection.

\subsection{Contact cyclic branched covers}\label{ssec:contactcyclicsurgery}

Let us start in the smooth category, where $M$ is a smooth manifold and $\Sigma\sse M$ is an embedded hypersurface with boundary $B=\dd\Sigma$. Consider the morphism
$$\iota_\Sigma:\pi_1(M\setminus B)\lr\Z: [\gamma]\longmapsto |\gamma\cap\Sigma|,$$
where $\gamma\sse M\setminus B$ is a representative intersecting only the interior of $\Sigma$ and such that the intersections are transverse. The kernel of the composition $p_n\circ\iota_\Sigma$, where $p_n:\Z\to\Z_n$ is reduction modulo $n$, defines the subgroup $\ker{(p_n\circ\iota_\Sigma)}\sse \pi_1(M\setminus B)$ and thus a covering space of $M\setminus B$. The completion of this covering space to a branched cover $C_n(B)$ over the manifold $M$ is called the $n$--fold cyclic branched cover of $M$ along the submanifold $B$.

In Subsection~\ref{ssec:contactbranch} we discussed the basics of contact branched covers. If $(Y,\xi)$ is a contact manifold, $B\sse (Y,\xi)$ a null-homologous contact submanifold, then the cyclic $n$--fold contact branched constructed in Theorem~\ref{bccontact} is denoted by $(C_n(B),\xi_n(B))$. 

In line with the smooth setting, a Weinstein hypersurface $(W,\la)\sse(Y,\xi)$ bounding $(B,\xi_B)$ can be used to construct $(C_n(B),\xi_n(B))$, as we now describe. We shall thus set $B=\dd W$ onwards in this section. Let $(W,\la)$ be a Weinstein domain and consider the stabilized Weinstein domain $(W\times D^2,\la+\la_\st)$. The projection onto the second factor restricted to the contact boundary $(\dd(W\times D^2),\ker\{\la+\la_\st\})$ yields the adapted contact open book decomposition
$$\dd((W\times D^2),\ker\{\la+\la_\st\})=\ob((W,\la),\id),$$
with page $(W,\la)$ and monodromy the identity $\id\in\Symp^c(W,\la)$. Let $(W_\theta,\la)\sse\ob((W,\la),\id)$ denote the interior of the page at angle $\theta\in S^1$, such that $W_\theta\cap W_{\theta'}=\emptyset$ for $\theta\neq\theta'$. Let $W_\theta'$ be the complement $\overline{W}_\theta\setminus\Op(\dd\overline{W}_\theta)$, i.e. the open subset of $W_\theta$ obtained by removing from $W_\theta$ its intersection with a small collar neighborhood of the binding. Then the contact cyclic branched cover with $(W,\la)$ as a Weinstein (Seifert) hypersurface is described as follows.

\begin{prop}\label{prop:cycliccoverasWeinsteinsum}
Let $(Y,\xi)$ be a contact manifold and $(W,\la)\sse(Y,\xi)$ a Weinstein hypersurface. The contact manifold $(C_n(\dd W),\xi_n(\dd W))$ is contactomorphic to the Weinstein sum of the contact manifold $\ob((W,\la),\id)$ with $n$-disjoint copies of $(Y,\xi)$ where the $j^{th}$ copy of $(Y,\xi)$ and $\ob((W,\la),\id)$ are glued along a page in $Y$ and $W'_{2\pi j/n}\sse\ob((W,\la),\id)$.
\end{prop}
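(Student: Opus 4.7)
The plan is to first verify the statement smoothly, and then check that the contact structures match using the convex hypersurface gluing theory underlying the Weinstein sum together with the uniqueness in Theorem~\ref{bccontact}. The overall strategy rests on the observation that the trivial open book $\ob((W,\la),\id)$ is, near its binding, precisely the local model for the $n$-fold cyclic branched cover along $\partial W$, while the complement of a neighborhood of its pages gives $n$ ``angular wedges'' that serve as the connecting tissue between the $n$ sheets of the cover.

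First I would construct the candidate contact manifold $(\widetilde Y,\widetilde\xi)$ obtained by iteratively performing the Weinstein hypersurface sum of $\ob((W,\la),\id)$ with $n$ disjoint copies $(Y_1,\xi),\dots,(Y_n,\xi)$ of $(Y,\xi)$, where the $j$-th sum is taken along the Weinstein hypersurface $W\sse Y_j$ and the page $W'_{2\pi j/n}\sse\ob((W,\la),\id)$. Using the standard neighborhood theorem of Subsection~\ref{ssec:WeinsteinHyper}, a page of $\ob((W,\la),\id)$ has the contact neighborhood $(W\times[-\epsilon,\epsilon],\ker\{\la-dt\})$, which is canonically identified with the neighborhood of the Weinstein hypersurface $W\sse Y_j$. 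Hence the convex gluings of \cite[Section 3.3]{Avdek} can be carried out, and the resulting manifold $(\widetilde Y,\widetilde\xi)$ is smooth and carries a well-defined contact structure.

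Second I would verify that smoothly $\widetilde Y$ is the $n$-fold cyclic branched cover of $Y$ along $\dd W$. Smoothly, excising open neighborhoods of the $n$ pages from $\ob(W,\id)$ yields $n$ ``angular wedges'', each diffeomorphic to $W\times(0,2\pi/n)$, together with a neighborhood of the binding $\dd W\times D^2$, whose restriction is the $n$-to-$1$ model $(x,z)\mapsto(x,z^n)$ for the branched covering map on a collar of the branch locus. Gluing in the $n$ cut-open copies $Y_j\setminus\Op(W)$ along their convex boundaries identifies the $j$-th sheet with the $j$-th wedge, cyclically, exactly as in the standard cut-and-paste construction of $C_n(\dd W)$ via the Seifert hypersurface $W$. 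A computation with the morphism $\iota_\Sigma:\pi_1(Y\setminus\dd W)\lr\Z$ from the beginning of Subsection~\ref{ssec:contactcyclicsurgery} confirms that this manifold realizes the kernel of $p_n\circ\iota_\Sigma$ as its deck group.

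Finally I would invoke the uniqueness half of Theorem~\ref{bccontact} by producing a contact form on $\widetilde Y$ together with a path $(\alpha_t)$ satisfying the two bullet points there. The Liouville form $\la$ on the standard neighborhoods of the pages pulls back to a form that vanishes transversely along the branch locus in the expected way, while on the complement the cover is honest and $p^*\alpha$ is contact; a compactly supported interpolation near the binding, identical to the one in \cite{Geiges97,OzturkNiederkruger07}, completes the path. The uniqueness statement in Theorem~\ref{bccontact} then yields the contactomorphism $(\widetilde Y,\widetilde\xi)\cong(C_n(\dd W),\xi_n(\dd W))$. The main technical difficulty, and the step deserving the most care, is the compatibility of the convex gluing model $\ker\{\la-dt\}$ on the Weinstein hypersurface neighborhood with the local cyclic branched-cover model at the binding: one must check that the contact form produced by iterated convex summing along the $n$ pages agrees, up to the allowed path of contact forms, with the branched-cover form of Theorem~\ref{bccontact} in a neighborhood of $\dd W$.
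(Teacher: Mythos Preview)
Your approach is correct but takes a genuinely different route from the paper's proof. You build the branched covering map $\widetilde Y\to Y$ directly---decomposing $\ob((W,\la),\id)$ into $n$ angular wedges plus a binding neighborhood, gluing in the $n$ cut-open copies of $Y$, and then invoking the uniqueness clause of Theorem~\ref{bccontact} to identify the contact structures. The paper instead exploits the inherent $\Z_n$-symmetry: it equips the Weinstein sum with the contact $\Z_n$-action that cyclically permutes both the pages $W'_{2\pi j/n}$ and the attached copies of $(Y,\xi)$, observes that the fixed set is the binding and that the quotient is the Weinstein sum of $\ob((W,\la),\id)$ with a \emph{single} copy of $(Y,\xi)$ along one page, and then uses the key observation that this last Weinstein sum is contactomorphic to $(Y,\xi)$ itself (because the complement of a closed page in the trivial open book is exactly the standard contactization neighborhood $(W\times[0,1],\ker\{\la+ds\})$ of $W$).

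What each buys: the paper's $\Z_n$-quotient argument avoids entirely the technical difficulty you flag at the end---there is no need to compare the convex-gluing contact form with the branched-cover local model near $\partial W$, because the branched-cover structure is manifest from the free-away-from-the-binding contact $\Z_n$-action, and the identification of the base with $(Y,\xi)$ is a soft statement about Weinstein sums. Your approach is more hands-on and perhaps more transparent about why the smooth topology works, but it pushes the contact-geometric content into the final interpolation step near the binding, which (as you note) requires genuine care.
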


\begin{proof}
Consider the symplectic $\Z_n$-action on $(W\times D^2,\la+\la_\st)$ given by
$$\Z_n\times (W\times D^2,\la+\la_\st)\lr (W\times D^2,\la+\la_\st): (e^{2\pi ik/n};w,z)\longmapsto(w,e^{2\pi ik/n}z),$$
where $D^2$ is the unit disk in $(\C,\la_\st)$. This action induces a contact $\Z_n$-action on the contact boundary $\ob((W,\la),\id)$, where $e^{2\pi ik/n}\in \Z_n$ sends the Weinstein hypersurface $W_{2\pi j/n}\sse\ob((W,\la),\id)$ to the hypersurface $W_{2\pi(j+k)/n}$, the index understood modulo $2\pi$. The fixed point set of this contact $\Z_n$-action is the contact binding $(\dd W,\xi)\sse\ob((W,\la),\id)$, and the quotient contact manifold is contactomorphic to $\ob((W,\la),\id)$. This expresses $\ob((W,\la),\id)$ as the contact $n$-fold cyclic branch cover of $\ob((W,\la),\id)$ along the contact binding $(\dd W,\xi_{\dd W})$, the crucial fact being that the $n$th power of the monodromy, being the identity, is (symplectically isotopic to) itself.

In this framework, perform the Weinstein sum of the contact manifold $\ob((W,\la),\id)$ with $n$ disjoint copies of $(Y,\xi)$ glued along the $n$ disjoint Weinstein hypersurfaces
$$W'_{2\pi k/n}\sse\ob((W,\la),\id),$$
for $1\leq k\leq n$, and extend the above $\Z_n$-action on $\ob((W,\la),\id)$ to this contact manifold. This is done in a natural manner, an element $e^{2\pi ik/n}\in\Z/n\Z$ contactomorphically sends the copy of $(Y,\xi)$ glued along $W'_{2\pi i/n}$ to the copy of $(Y,\xi)$ glued along $W'_{2\pi (i+k)/n}$. The quotient of this contact $\Z/n\Z$ is now contactomorphic to the Weinstein sum of $\ob((W,\la),\id)$ and $(Y,\xi)$ along the Weinstein surface $(W,\la)$, where $(W,\la)$ is any arbitrary page in $\ob((W,\la),\id)$. The fixed point set is still the contact binding in $\ob((W,\la),\id)$.

The above paragraph expresses the Weinstein sum in the statement as the contact cyclic $n$--fold branch cover of the Weinstein sum
$$\ob((W,\la),\id){}_{(W,\la)}{\#}{_{(W,\la)}} (Y,\xi),$$
with branch locus $\dd W\sse \ob((W,\la),\id)$. Hence, in order to conclude Proposition~\ref{prop:cycliccoverasWeinsteinsum} it suffices to notice that this Weinstein sum is contactomorphic to $(Y,\xi)$. Indeed, by the definition of an adapted open book \cite[Section 4]{Colin08} the complement of the closure of a page $\overline{W}\sse\ob((W,\la),\id)$, and thus $(\ob((W,\la),\id)\setminus(W,\la),\xi_\st)$, is contactomorphic to the contactization
$$(W\times [0,1],\ker\{\la+ds\})$$
of the Weinstein page $(W,\la)$, which is the standard neighborhood of the Weinstein hypersurface $(W,\la)$ discussed in Subsection \ref{ssec:WeinsteinHyper}. Thus the Weinstein sum above removes the interior of a standard neighborhood of $(W,\la)$ in $(Y,\xi)$ and replaces it with a contactomorphic copy of itself glued with the identity along the boundary. This concludes the required statement.
\end{proof}

Proposition~\ref{prop:cycliccoverasWeinsteinsum} gives the following direct description of the cyclic $n$--fold contact branched cover $(C_n(\partial W),\xi_n(\partial W))$ of $(Y,\xi)$ along the boundary of the Weinstein hypersurface $(W,\la)$. In a standard neighborhood $(W\times [-\e, \e],\ker{\la-ds})\sse(Y,\xi)$, for $\e\in\R^+$, consider the Weinstein hypersurfaces $W_k=W\times\{-\delta+2k\delta/n\}$ for $1\leq k\leq n$, where $0<\delta\leq\e$ is arbitrary but fixed.

\begin{cor}\label{cor:neededdescription}
Let $(Y,\xi)$ be a contact manifold and $(W,\la)\sse(Y,\xi)$ a Weinstein hypersurface. The contact manifold $(C_n(\partial W),\xi_n(\partial W))$ is contactomorphic to the Weinstein connected sum of $(Y,\xi)$ with $(n-1)$ copies $(Y_k,\xi)$, $1\leq k\leq (n-1)$, where the original $(Y,\xi)$ is summed to $(Y_k,\xi)$ along $(W_k,\la)\sse (Y,\xi)$ and $(W,\la)\sse(Y_k,\xi)$.
\hfill$\Box$
\end{cor}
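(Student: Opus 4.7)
The plan is to derive Corollary \ref{cor:neededdescription} directly from Proposition \ref{prop:cycliccoverasWeinsteinsum} by absorbing one of the $n$ summand copies of $(Y,\xi)$ into the trivial open book factor. The proof of Proposition \ref{prop:cycliccoverasWeinsteinsum} already establishes the key identity
$$\ob((W,\la),\id){}_{(W,\la)}{\#}{_{(W,\la)}}(Y,\xi)\;\cong\;(Y,\xi),$$
as a consequence of the fact that the complement of a page in $\ob((W,\la),\id)$ is the contactization $(W\times[0,1],\ker\{\la+ds\})$, which is precisely the standard neighborhood of the Weinstein hypersurface $(W,\la)\sse(Y,\xi)$ of Subsection \ref{ssec:WeinsteinHyper}.

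Starting from the description furnished by Proposition \ref{prop:cycliccoverasWeinsteinsum}, the contact branched cover $(C_n(\dd W),\xi_n(\dd W))$ is the Weinstein sum of $\ob((W,\la),\id)$ with $n$ disjoint copies $(Y_1,\xi),\ldots,(Y_n,\xi)$ of $(Y,\xi)$, the $j$-th glued along the page $W'_{2\pi j/n}\sse\ob((W,\la),\id)$. I would then absorb the copy $(Y_n,\xi)$, namely the one attached along the page $W'_{0}$, into the trivial open book using the identity above. This produces a single copy of $(Y,\xi)$, and by tracking the remaining $(n-1)$ pages through the absorbing contactomorphism, each page $W'_{2\pi j/n}$ for $j=1,\ldots,n-1$ is carried into the standard neighborhood $(W\times[-\e,\e],\ker\{\la-ds\})$ of $(W,\la)\sse(Y,\xi)$ as a parallel parametrized copy of $(W,\la)$ sitting at a distinct value of the $s$-coordinate.

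A reparametrization of the $s$-coordinate then identifies these parallel copies with the prescribed slices $W_k=W\times\{-\delta+2k\delta/n\}$ for $1\les k\les n-1$, so that the remaining copies $(Y_k,\xi)$ of the ambient contact manifold end up Weinstein-summed to $(Y,\xi)$ along $(W_k,\la)\sse(Y,\xi)$ and $(W,\la)\sse(Y_k,\xi)$, exactly as in the statement of Corollary \ref{cor:neededdescription}. I expect the only point requiring care to be the uniqueness, up to contactomorphism relative to the germ of the boundary, of the standard neighborhood of a Weinstein hypersurface; this is what justifies both the identification of the images of the pages with the slices $W_k$ and the fact that the outcome is independent of the auxiliary choices of ordering and of the parameter $\delta$, and it is controlled by the standard neighborhood result \cite[Lemma~3.1]{Avdek}.
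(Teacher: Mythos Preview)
Your proposal is correct and follows essentially the same approach as the paper: you absorb one of the $n$ copies of $(Y,\xi)$ into $\ob((W,\la),\id)$ via the identity $\ob((W,\la),\id){}_{(W,\la)}{\#}{_{(W,\la)}}(Y,\xi)\cong(Y,\xi)$ established in the proof of Proposition~\ref{prop:cycliccoverasWeinsteinsum}, and then track the remaining $n-1$ pages into the standard neighborhood of $(W,\la)\sse(Y,\xi)$ as the parallel slices $W_k$. The paper's justification is exactly this, stated in a single sentence; your added remark about the uniqueness of the standard neighborhood (via \cite[Lemma~3.1]{Avdek}) is a welcome point of care but not a departure in method.
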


In particular, Corollary \ref{cor:neededdescription} recovers \cite[Theorem 1.20]{Avdek}. The description of $(C_n(\partial W),\xi_n(\partial W))$ in Corollary \ref{cor:neededdescription} follows from the proof of Proposition~\ref{prop:cycliccoverasWeinsteinsum} since after Weinstein summing $(Y,\xi)$ to $\ob((W,\la),\id)$ along $(W,\la)$ and $(W_1,\la)$,  the $W_{2\pi k/n}$ for $2\leq k\leq n$ in  $\ob((W,\la),\id)$ become the $(k-1)$ copy of $W\sse(Y,\xi)$ under the identification of $(Y,\xi)$ with the Weinstein sum of $(Y,\xi)$ and $\ob((W,\la),\id)$. Note that we have included Corollary \ref{cor:neededdescription} for all $n\in\N$ for completeness, the case $n=2$ will suffice to prove Theorem \ref{thm:main}.

\subsection{Weinstein fillings}\label{ssec:proofmain} 
Let us establish the Legendrian handlebody presentation of the Weinstein filling of the cyclic $2$--fold branched cover in Theorem~\ref{thm:contactsurgery}.

\begin{proof}[Proof of Theorem~\ref{thm:contactsurgery}]
First, let us apply Proposition \ref{prop:cycliccoverasWeinsteinsum} to the Weinstein neighborhood $(W,\la)\sse(S^{2n+1},\xi_\st)$ of the given Legendrian $\La\sse(S^{2n+1},\xi_\st)$, thus identifying the contact $\Z_2$-cyclic branched cover $(C_2(\tau(\La)),\xi_2(\tau(\La))$ with the Weinstein sum of $(S^{2n+1},\xi_{\st})$ with a copy of itself along the Weinstein hypersurface $(W,\la)$.

Second, we now follow the proof of Proposition \ref{prop:weinsteinsumcob} in the case of $(W,\la)\cong(T^*\La,\la_\st)$. The Proposition provides a $(2n+2)$-dimensional Weinstein cobordism $(X,\la)$ with concave end the disjoint union of two copies of $(S^{2n+1},\xi_{std})$ and convex end $(C_2(\tau(\La)),\xi_2(\tau(\La))$, the Legendrian handle decomposition of which we now describe.

The Weinstein neighborhood $(T^*\La,\la_\st)$ of Legendrian sphere $\La\sse(S^{2n+1},\xi_\st)$ has a Legendrian handlebody decomposition \cite[Section 2]{CasalsMurphy} given by attaching a $2n$-dimensional Weinstein $n$-handle to the disk $(D^{2n},\la_\std)$ along the Legendrian unknot $\La_0\sse(\partial D^{2n},\ker\{\la_\st|_{\partial D^{2n}}\})\cong(S^{2n+1},\xi_\st)$. The use of the Weinstein cobordism $(W^{(1)},\la^{(1)})$ in the proof of Proposition \ref{prop:weinsteinsumcob} dictates that the Weinstein cobordism $(X,\la)$ is constructed from the symplectization of two copies of $(S^{2n+1},\xi_{\st})$ by first attaching a $2n$-dimensional Weinstein $1$-handle with attaching isotropic sphere $S^0\sse\La\sqcup\La\sse(S^{2n+1},\xi_\st)\sqcup(S^{2n+1},\xi_\st)$, where the first point in $S^0$ is attached to the first factor of the disjoint union, and the second point of $S^0$ is attached to the second factor of the disjoint union. By construction of the contact connected sum $(S^{2n+1},\xi_\st)\#(S^{2n+1},\xi_\st)$, the convex end of the Weinstein cobordism induced by attaching this Weinstein $1$-handle is contactomorphic to the contact connected sum.

In the identification of the convex boundary with $(S^{2n+1},\xi_\st)\#(S^{2n+1},\xi_\st)$, the two Legendrians $\La\sqcup\La$ in the concave end become the Legendrian connected sum $\La\#\La$, as defined in Subsection~\ref{sssec:legsum}. Indeed, the Weinstein $1$--handle is attached to an isotropic $S^0$ in $\La\sqcup\La$. The Weinstein cobordism $(X,\la)$ is obtained by attaching a $(2n+2)$--dimensional Weinstein $(n+1)$--handle along the Legendrian sphere $\La\#\La$. Let us consider the Weinstein filling $(D^{2n+2},\la_\st)\sqcup(D^{2n+2},\la_\st)$ of the concave end of $(X,\la)$, thus completing $(X,\la)$ to a Weinstein filling $(\overline{X},\overline{\la}_\st)$. The Legendrian handlebody decomposition for $(\overline{X},\overline{\la}_\st)$ induced by the above construction has two Weinstein $0$-handles, a unique Weinstein $1$-handle and a unique critical Weinstein handle. This is depicted in the lower row of Figure~\ref{fig:Z0ContactSurgery}.

The statement of Theorem \ref{thm:contactsurgery} now follows by modifying the above Legendrian handle decomposition by applying a cancellation move \cite[Proposition 2.21]{CasalsMurphy} to the canceling pair given by one of the Weinstein $0$-handle and the Weinstein $1$-handle.
\end{proof}

\section{Non-isotopic isocontact embeddings}
\label{sec:mainproof}
Let us construct two isocontact embeddings using contact push-offs of Legendrian spheres in $(S^{2n+1},\xi_\std)$. For the first embedding, consider the standard Legendrian unknot $\Lambda_0 \sse (S^{2n+1},\xi_\std)$. The contact push-off $\tau(\La_0)$ defines a contact embedding
$$i_0:(\dd T^*S^n,\xi_\std)\lr(S^{2n+1},\xi_\std).$$

For the second isocontact embedding, consider a Legendrian stabilization $\La_1:=s(\La_0)$ of $\La_0\sse(S^{2n+1},\xi_\st)$. The contact push-off $\tau(\La_1)$ of this stabilized Legendrian unknot yields our second contact embedding
$$i_1:(\dd T^*S^n,\xi_\std)\sse(S^{2n+1},\xi_\std).$$

\begin{proof}[Proof of Theorem~\ref{thm:main}]
We must now show that these two isocontact embeddings
$$i_0,i_1:(\dd T^*S^n,\xi_\std)\sse(S^{2n+1},\xi_\std),$$
are formally contact isotopic but not contact isotopic, as stated in Theorem~\ref{thm:main}. To show that $i_0,i_1$ are formally contact homotopic, we apply Lemma \ref{lem:formallegisotopy} to deduce that $\La$ and $s(\La)$, which then allows us to apply our Lemma~\ref{lem:formaldetermined}, proving that the isocontact embeddings $i_0$ and $i_1$ are formally contact isotopic.

Suppose now, by contradiction, that $i_0$ and $i_1$ are contact isotopic. Then the branched double covers of $(S^{2n+1},\xi_\std)$ with branch loci $\tau(\La_0)$ and $\tau(\La_1)$ would be contactomorphic, and hence have the same symplectic fillings. Thus Theorem \ref{thm:main} shall follow from the following two propositions. 

\begin{prop}\label{prop:Z0}
The contact manifold $(C_2(\tau(\La_0)),\xi_2(\tau(\La_0))$ admits the adapted contact open book decomposition
$$(C_2(\tau(\La_0)),\xi_2(\tau(\La_0))=\ob((T^*S^{n-1},\la_\std,\tau^2_{S^{n-1}})).$$
In particular, $(C_2(\tau(\La_0)),\xi_2(\tau(\La_0))$ does not admit a flexible Weinstein filling.
\end{prop}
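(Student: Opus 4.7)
The proposition has two parts: identifying the open book and precluding a flexible filling.

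\emph{Open book identification.} By Example \ref{ex:exampleOB}, the standard contact sphere carries the adapted open book $(S^{2n+1},\xi_\std)=\ob(T^*S^n,\tau_{S^n})$, with the standard Legendrian unknot $\La_0\sse(S^{2n+1},\xi_\std)$ realized as the Legendrian lift of the zero section $S^n\sse T^*S^n$. Consequently a Weinstein tubular neighborhood of $\La_0$ is exactly the page $(T^*S^n,\la_\std)$ of this open book, and its contact boundary $\tau(\La_0)=\dd T^*S^n$ is precisely the binding of the open book. Applying Proposition \ref{prop:cycliccoverasWeinsteinsum} with $n=2$ to the Weinstein hypersurface $(T^*S^n,\la_\std)\sse(S^{2n+1},\xi_\std)$, the double contact branched cover $(C_2(\tau(\La_0)),\xi_2(\tau(\La_0)))$ is contactomorphic to the Weinstein sum of $\ob(T^*S^n,\id)$ with two copies of $\ob(T^*S^n,\tau_{S^n})$ along matching page hypersurfaces. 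Concatenating the three page monodromies $\id,\tau_{S^n},\tau_{S^n}$ around the $S^1$-direction of the reassembled open book yields the claimed adapted open book with page $(T^*S^n,\la_\std)$ and monodromy $\tau_{S^n}^2$.

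\emph{No flexible Weinstein filling.} Suppose, for contradiction, that $W$ is a flexible Weinstein filling of $(C_2(\tau(\La_0)),\xi_2(\tau(\La_0)))$. By the Bourgeois-Ekholm-Eliashberg vanishing theorem, full $S^1$-equivariant symplectic homology satisfies $SH^{S^1,*}(W)=0$. Feeding this into the Bourgeois-Oancea tautological long exact sequence forces the positive $S^1$-equivariant symplectic homology of the contact boundary to be isomorphic, up to a degree shift, to the equivariant cohomology $H^{S^1,*}(W,\dd W)$, which is a finitely generated $\Q[u]$-module bounded by the topology of the compact manifold $W$. On the other hand, the open book $\ob(T^*S^n,\tau_{S^n}^2)$ admits a natural (non-flexible) Weinstein filling from the associated Lefschetz fibration with fiber $T^*S^n$ and two vanishing cycles along the zero section, and the Kwon-Van Koert and Zhou computations of positive $S^1$-equivariant symplectic homology for this Brieskorn-type contact manifold yield nontrivial classes in an infinite sequence of gradings incompatible with any such bounded topological model $H^{S^1,*}(W,\dd W)$. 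This contradicts the assumed flexibility of $W$.

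\emph{Main obstacle.} The open book identification is a direct bookkeeping application of Proposition \ref{prop:cycliccoverasWeinsteinsum} and Example \ref{ex:exampleOB}. The substantive difficulty is concentrated in the non-flexibility statement, which rests on the Bourgeois-Ekholm-Eliashberg vanishing for flexible fillings, the Bourgeois-Oancea exact sequence, and the explicit Kwon-Van Koert / Zhou computation of positive $S^1$-equivariant symplectic homology for the Brieskorn-type contact manifold $\ob(T^*S^n,\tau_{S^n}^2)$; verifying or citing that this invariant does not arise from any flexible topological model is the technical heart of the proof.
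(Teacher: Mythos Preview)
Your open book identification is correct and essentially the same as the paper's, though you route it through Proposition~\ref{prop:cycliccoverasWeinsteinsum} and a monodromy-concatenation argument, whereas the paper simply invokes the direct fact that the $k$-fold contact branched cover of $\ob(W,\varphi)$ along its binding is $\ob(W,\varphi^k)$. Both are fine; the paper's route is slightly more direct.

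The non-flexibility argument, however, has a genuine gap. You compute $SH^{S^1,+}$ for the hypothetical flexible filling $W$ via the Bourgeois--Oancea sequence, and then you compute it for the Brieskorn filling via Kwon--van Koert, and assert these are ``incompatible''. But $SH^{S^1,+}$ is an invariant of the \emph{filling}, not of the contact boundary, so there is no a priori reason these two computations should agree. Establishing that $SH^{S^1,+}$ (or some avatar of it) is independent of the Weinstein filling for this class of contact manifolds is precisely the substance of Zhou's theorem, and it is not a formal consequence of BEE vanishing plus the tautological exact sequence. Your final step ``incompatible with any such bounded topological model'' is where this invariance would have to be proven, and it is left as a black box. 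Moreover, $H^{S^1,*}(W,\dd W)\cong H^*(W,\dd W)\otimes\Q[u]$ is itself nonzero in infinitely many degrees, so ``infinite sequence of gradings'' alone does not give a contradiction; you would need to pin down the module structure or specific degrees.

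The paper's argument avoids this entirely: it cites Zhou's Theorem~20, which states that if a contact manifold admits a flexible Weinstein filling then \emph{every} Weinstein filling has vanishing (ordinary) symplectic homology; Kwon--van Koert's nonvanishing $SH_*(T^*S^{n+1})\neq 0$ then gives the contradiction directly. You should either cite Zhou's result as a packaged statement, or make explicit the filling-independence step you are implicitly using.
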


\begin{proof}
The standard contact $(2n+1)$-sphere admits \cite[Section 4.3]{Colin08} the contact open book decomposition
	$$(S^{2n+1},\xi_\std)=\ob((T^*S^{n},\la_\std),\tau_{S^{n-1}})).$$
	The Legendrian lift of the exact Lagrangian zero-section $S^{n}\sse(T^*S^{n},\la_\std)$ is Legendrian isotopic to the Legendrian unknot $\La_0$ in $(S^{2n+1},\xi_\std)$, see for instance \cite[Section 2]{CasalsMurphy}. In consequence, the Weinstein page of the open book can be taken to be the Weinstein neighborhood $\La_0\sse(S^{2n+1},\xi_\std)$, and hence the binding of the open book is the contact push-off $\tau(\La_0)$. This implies that $(C_2(\tau(\La_0)),\xi_2(\tau(\La_0))$ is the contact double branched cover of
	$$(S^{2n+1},\xi_\std)=\ob((T^*S^{n},\la_\std),\tau_{S^{n-1}})),$$
	branched along the contact binding. These particular contact branched covers admit a direct contact open book decomposition. Indeed, one may readily verify that the contact $k$-cyclic branched cover of the contact manifold $\ob((W,\la),\varphi))$ branched along the binding is supported by the open book $\ob((W,\la),\varphi^k))$, see \cite[Section 4]{KwonVanKoert16} and \cite[Section 4]{OzturkNiederkruger07} for details. In particular, we conclude that $(C_2(\tau(\La_0)),\xi_2(\tau(\La_0))$ is supported by the contact open book $\ob((T^*S^{n},\la_\std),\tau^2_{S^{n-1}}))$, which proves the first statement in Proposition~\ref{prop:Z0}.
	
	Regarding the second statement, $(C_2(\tau(\La_0)),\xi_2(\tau(\La_0))$ is a Brieskorn manifold, see for example \cite{OzturkNiederkruger07}, and bounds a Brieskorn variety $(V,\omega)$. More specifically our contact manifold is oftentimes called the contact Brieskorn sphere $\Sigma(2,\ldots,2)$ in the literature \cite{KwonVanKoert16,OzturkNiederkruger07,Zhou} and the filling is simply $(T^*S^{n+1}, \la_\std)$. That this indeed fills our contact manifold can be deduced by noting that $(T^*S^{n+1},\la_\std)$ admits a Weinstein Lefschetz fibration \cite[Section 3]{CasalsMurphy} with Weinstein fiber $(T^*S^{n},\la_\std)$ and two critical points, whose vanishing cycles are each a copy of the Lagrangian zero section $S^n\sse(T^*S^{n},\la_\std)$ in a regular fiber. Now the Lefschetz fibration restricts to an open book on $\partial T^*S^{n+1}$ that agrees with our given open book decomposition. 
	
	Kwon and van Koert \cite[Theorem 1.2]{KwonVanKoert16} computed that the symplectic homology of the Brieskorn variety $(V,\omega)$ and showed, in particular, that it does not vanish
	$$SH_*((V,\omega);\Z)\neq0.$$ 
	On the other hand, \cite[Theorem 20]{Zhou} shows that if a contact manifold admits a flexible filling then any Weinstein filling of that contact manifold will have vanishing symplectic homology. This result, which initially used \cite{BourgeoisEkholmEliashberg12}, also follows, independently, from the h-principle \cite[Section 6.2]{EliashbergMurphy13}. This follows from the arguments in the proof of Theorem~3.2 in \cite{MurphySiegel18}. 
\end{proof}

\begin{prop}\label{prop:flexiblefilling}
The contact structure $(C_2(\tau(\La_1)),\xi_2(\tau(\La_1))$ in the $2$-fold contact branched cover of $(S^{2n+1},\xi_\std)$ along $\tau(\La_1)$ admits a flexible Weinstein filling.
\end{prop}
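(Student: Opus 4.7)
The plan is to apply Theorem~\ref{thm:contactsurgery} to the stabilized Legendrian $\La_1=s(\La_0)$ and then verify flexibility of the resulting Weinstein filling by confirming that its unique critical attaching Legendrian is loose.

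First, by Theorem~\ref{thm:contactsurgery} applied with $\La=\La_1$, the contact manifold $(C_2(\tau(\La_1)),\xi_2(\tau(\La_1)))$ admits a Weinstein filling $(W_1,\la)$ obtained by attaching a single Weinstein $(n+1)$-handle to $(D^{2n+2},\la_\st)$ along the Legendrian connected sum $\La_1\#\La_1\sse(S^{2n+1},\xi_\std)$. Hence $(W_1,\la)$ has a Weinstein handlebody decomposition consisting of one $0$-handle and one critical $(n+1)$-handle, with no Weinstein handles of intermediate index. Since $(W_1,\la)$ has dimension $2(n+1)$, the definition of a flexible Weinstein manifold recalled in Subsection~\ref{ssec:WeinsteinHyper} reduces flexibility of $(W_1,\la)$ to showing that the attaching sphere $\La_1\#\La_1$ is a loose Legendrian in $(S^{2n+1},\xi_\std)$.

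Second, I would verify looseness of $\La_1\#\La_1$ as follows. By Definition~\ref{def:loose}, $\La_1=s(\La_0)$ is loose. The two copies of $\La_1$ used in forming the connected sum can be placed inside two disjoint Darboux balls of $(S^{2n+1},\xi_\std)$, since $\La_0$ is contained in a Darboux ball and stabilization is a local operation. The observation recorded at the end of Subsection~\ref{sssec:legsum}, which combines Definition~\ref{def:legsum} with the $h$-principle for loose Legendrians of \cite{Murphy??}, then tells us that the Legendrian connected sum of two unlinked Legendrians at least one of which is loose is itself loose. Applying this with both summands equal to the loose $\La_1$ shows that $\La_1\#\La_1$ is loose, and therefore $(W_1,\la)$ is a flexible Weinstein filling of $(C_2(\tau(\La_1)),\xi_2(\tau(\La_1)))$.

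The one step that requires attention is the first, namely ensuring that Theorem~\ref{thm:contactsurgery} genuinely produces a handlebody whose unique critical attaching sphere is the Legendrian connected sum $\La_1\#\La_1$ rather than a Legendrian obtained from it after some simplification; this is precisely the content of Theorem~\ref{thm:contactsurgery}, so once it is in hand no further $h$-principle argument beyond what has already been assembled in the preliminaries is needed to conclude.
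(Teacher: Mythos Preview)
Your proposal is correct and follows essentially the same approach as the paper: apply Theorem~\ref{thm:contactsurgery} to obtain a Weinstein filling with a single critical handle attached along $\La_1\#\La_1$, then observe that the connected sum of loose Legendrians is loose, so the filling is flexible. The paper's proof is just a terser version of what you wrote, and your added remarks about placing the summands in disjoint Darboux balls simply make explicit the hypothesis needed to invoke the observation at the end of Subsection~\ref{sssec:legsum}.
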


\begin{proof}
Theorem~\ref{thm:contactsurgery} implies that a Weinstein filling for our contact manifold
$$(C_2(\tau(\La_1)),\xi_2(\tau(\La_1)))$$ is obtained by attaching a Weinstein critical $(n+1)$-handle to $(D^{2n+2},\la_\std)$ along the Legendrian $\La_1\#\La_1\sse(S^{2n+1},\xi_\std)$. The Legendrian connected sum of two loose Legendrians is a loose Legendrian, and thus the Weinstein filling is flexible.
\end{proof}

This concludes the proof of Theorem \ref{thm:main} and thus shows that there exists one rigid pair of contact submanifolds in all higher-dimensions.\end{proof}

\section{Infinite pairs of non-isotopic contact submanifolds}\label{sec:infinte}
The infinite family of pairs of non-contact isotopic embeddings in Theorem~\ref{cor:infinitelymany} will be constructed by studying the contact push-offs of a class of singular Legendrians and then proven distinct just as we distinguished the two examples from our Theorem~\ref{thm:main} in the previous section. 

\subsection{An infinite family of non-isotopic isocontact embeddings}
Let $(A^{2n}_k,\la_\std,\p_\std)$ be the Weinstein manifold obtained as an $A_k$--linear plumbing of $k$ copies of the Weinstein manifold $(T^*S^{n},\la_\std,\p_\std)$. See Figure~\ref{AkFiber} for a handle presentation. This manifold is known as the $2n$--dimensional Milnor fibre of the $A_k$ singularity, see Subsection~\ref{sssec:obstab} above, and its Lagrangian skeleton $\Sk(A^{2n}_k)\sse(A^{2n}_k,\la_\std,\p_\std)$ consists of a linear $A_k$-plumbing \cite[Section~3]{CasalsMurphy} of Lagrangian $n$--spheres. 
%

The standard contact sphere $(S^{2n+1},\xi_\std)$ admits the contact open book decomposition
$$(S^{2n+1},\xi_\std)\cong\ob((A^{2n}_k,\la_\std,\p_\std),\tau_{A_k}),$$
where $\tau_{A_k}\in\Symp^c(A^{2n}_k,d\la_\std)$ is the compactly supported symplectomorphism obtained by composing, in any order, the symplectic Dehn twists \cite{Arnold95,Seidel08} along each of the $k$ plumbed zero-sections $S^n\sse(T^*S^{n},\la_\std,\p_\std)$ in the $A_k$-Milnor fiber. Indeed, this is the $k$-fold stabilization of the contact open book decomposition $(S^{2n+1},\xi_\std)=\ob(D^{2n},\id)$, as described in Subsection~\ref{sssec:obstab}, which by Theorem~\ref{thm:obstab} is contactomorphic to $(S^{2n+1},\xi_\std)$. Alternatively, this can be seen by considering the Milnor fibration associated to the $(n+1)$-dimensional $A_k$-singularity \cite{Arnold90,CasalsMurphy}.

The contact push-off associated to the Weinstein hypersurface $$(A^{2n}_k,\la_\std,\p_\std)\sse \ob((A^{2n}_k,\la_\std,\p_\std),\tau_{A_k})$$
given by the page of this contact open book is the contact binding $\dd(A^{2n}_k,\la_\std)$. This yields the desired embedding of a Weinstein neighborhood of $\Sk(A^{2n}_k)\sse(A^{2n}_k,\la_\std,\p_\std)$ in $(S^{2n+1},\xi_\std)$, i.e. in the language of Section \ref{sec:pushoff}, $\tau(\Sk(A^{2n}_k))=\dd(A^{2n}_k,\la_\std)$.

The two infinite families of isocontact embeddings are
$$i^k_0:\tau(\Sk(A^{2n}_k))\lr(S^{2n+1},\xi_\std),$$
$$i^k_1:\tau(s(\Sk(A^{2n}_k)))\lr(S^{2n+1},\xi_\std),$$
where $s(\Sk(A^{2n}_k))$ is the stabilized Legendrian lift of the Lagrangian skeleton $\Sk(A^{2n}_k)$, as constructed in Section \ref{sec:pushoff}. 

\subsection{Distinguishing isocontact embeddings}
Let us now proceed with the proof of Theorem~\ref{cor:infinitelymany}.
\begin{proof}[Proof of Theorem~\ref{cor:infinitelymany}]
Let $n\in\N$ be fixed and $n\geq2$. Following the strategy of the proof of Theorem~\ref{thm:main} in Section~\ref{sec:mainproof}, we first note that Lemma~\ref{lem:singpushoff} shows that $i^k_0$ and $i^k_1$ are formally contact isotopic for all $k\in\N$. Let us show that $(i_0^k,i_1^k)$, with $k\in\N$, is an infinite family of pairs of non-isotopic contact embeddings in the standard contact sphere $(S^{2n+1},\xi_\std)$. There are two steps:

\begin{itemize}
	\item[-] For a fixed $k\in\N$, the isocontact embeddings $i_0^{k}$ and $i_1^{k}$ are not contact isotopic.
	
	\item[-] For any two $k,l\in\N$, $i_0^{k}$ is not contact isotopic to $i_0^{l}$ nor $i_1^{l}$.
\end{itemize}

Let us first address the second item by showing that the contact domains of $i_0^{k}$ and $i_0^{l}$, and thus also that of $i_1^{l}$, are not contactomorphic. Consider the two cases, where $n$ is either even or odd. In the former case, $n$ even, the smooth manifolds $\tau(\Sk(A^{2n}_k))$ have $(n-1)$st Betti number
$$\mbox{rk}(H_{n-1}(\tau(\Sk(A^{2n}_k)),\Z))=k-1,$$
as proven in \cite[Section 2]{Randell} and thus the domains are not homotopy equivalent unless $k=l$. In the later case, $n$ odd, the contact domains can be diffeomorphic for different $k,l\in\N$. Nevertheless, the mean Euler characteristic is a contact invariant in this case by \cite[Lemma~5.15]{KwonVanKoert16}, or \cite[Corollary~2.2]{FrauenfelderSchlenvanKoert}. For $k$ odd, they computed it to be
$$\chi_{m}(\tau(\Sk(A^{2n}_{2s+1})))=\frac{1}{2}\frac{(n-1)(2s+1)+1}{(n-2)(2s+1)+2},$$
which is an injective function on $k=2s+1\in\N$, and thus the contact domains, for $k$ odd, are not contactomorphic. For $k\in\N$ even, $k=2s$, the mean Euler characteristic is $\chi_{m}(\tau(\Sk(A^{2n}_{2s+1})))=1$ by \cite[Section 5]{KwonVanKoert16}. In fact, for $k$ even, the positive $S^1$-equivariant symplectic homologies of the Brieskorn fillings coincide. Thus, this is not enough to distinguish the contactomorphism type of the contact push-offs. Nonetheless, \cite[Lemma 3.2]{Uebele} shows that the full positive symplectic homology is a contact invariant if computed for a Weinstein filling with vanishing first Chern class. Since $c_1(A^{2n}_k,\la_\std)=0$ for all $k\in\N$, given that $(A^{2n}_k,\la_\std)$ is an affine hypersurface, the positive symplectic homology of the Brieskorn Milnor fiber $(A^{2n}_k,\la_\std)$ is a contact invariant of its convex boundary $\tau(\Sk(A^{2n}_{2s}))$. The positive symplectic homology of $(A^{2n}_{2s},\la_\std)$ is computed in \cite[Theorem 3.1]{Uebele}, where it is shown to be distinct for different values of $s\in\N$ \cite[Corollary 3.3]{Uebele}. This establishes the second item above.

As in Section~\ref{sec:mainproof}, we distinguish the two isocontact embeddings $i_0^{k}$ and $i_1^{k}$ in the first item above by the contactomorphism type of their cyclic 2--fold branched covers, which in turn are distinguished by the existence --- and lack thereof --- of flexible Weinstein fillings. The first item, and thus Theorem~\ref{cor:infinitelymany} follow from the following result. 
\end{proof}

\begin{prop}\label{prop:Ak}
	Let $n\geq2$ and $k\in\N$, and consider the two isocontact embeddings
	$$i^k_0:\tau(\Sk(A^{2n}_k))\lr(S^{2n+1},\xi_\std),$$
	$$i^k_1:\tau(s(\Sk(A^{2n}_k)))\lr(S^{2n+1},\xi_\std).$$
	
	Then the cyclic 2--fold branched cover of $(S^{2n+1},\xi_\std)$ along the contact submanifold $\im(i^k_0)$ does not admit a flexible Weinstein filling. In contrast,  the cyclic 2--fold branched cover of $(S^{2n+1},\xi_\std)$ along  the contact submanifold $\im(i^k_1)$ does.\hfill$\Box$
\end{prop}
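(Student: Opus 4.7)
The plan is to mirror the strategy used in the proof of Theorem~\ref{thm:main}, distinguishing the two isocontact embeddings by analyzing whether their $2$-fold contact branched covers admit flexible Weinstein fillings. For $i^k_0$ I would identify the branched cover with a Brieskorn manifold whose Milnor fibre has non-vanishing positive symplectic homology, ruling out any flexible filling. For $i^k_1$ I would generalize Theorem~\ref{thm:contactsurgery} to the setting where the Legendrian is the skeleton of a Weinstein hypersurface, producing an explicit flexible Weinstein filling whose critical handles are attached along Legendrian connected sums of loose Legendrian spheres.

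For the non-flexibility assertion, recall from Subsection~\ref{sssec:obstab} that $(S^{2n+1},\xi_\std)=\ob((A^{2n}_k,\la_\std),\tau_{A_k})$ has binding precisely $\tau(\Sk(A^{2n}_k))$. The same argument used in Proposition~\ref{prop:Z0}, which invokes \cite[Section~4]{KwonVanKoert16} and \cite[Section~4]{OzturkNiederkruger07}, then shows that the contact double branched cover along the binding is supported by the open book $\ob((A^{2n}_k,\la_\std),\tau_{A_k}^2)$. I would next identify this contact manifold with the link of the isolated singularity
\[
z_1^{k+1}+z_2^2+\cdots+z_{n+1}^2+w^2=0
\]
in $\C^{n+2}$, i.e.\ the Brieskorn manifold $\Sigma(k+1,2,\ldots,2)$ with $n+1$ twos, which bounds the Milnor fibre $(A^{2n+2}_k,\la_\std)$ as a Weinstein filling. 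The positive $S^1$-equivariant symplectic homology computations of \cite[Theorem~1.2]{KwonVanKoert16}, refined for all parities in \cite[Theorem~3.1]{Uebele}, give $SH_*((A^{2n+2}_k,\la_\std);\Z)\neq 0$. Combined with \cite[Theorem~20]{Zhou}, or equivalently with the argument drawn from \cite[Section~6.2]{EliashbergMurphy13} used in \cite{MurphySiegel18}, which forces every Weinstein filling of a contact manifold admitting a flexible Weinstein filling to have vanishing symplectic homology, this precludes any flexible Weinstein filling of $(C_2(\tau(\Sk(A^{2n}_k))),\xi_2)$.

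For the flexibility assertion, I would extend the proof of Theorem~\ref{thm:contactsurgery} to the Weinstein hypersurface $(A^{2n}_k,\la_\std)\hookrightarrow(S^{2n+1},\xi_\std)$ associated to the stabilized lift $s(\Sk(A^{2n}_k))$ via Lemma~\ref{lem:singpushoff}. Applying Proposition~\ref{prop:cycliccoverasWeinsteinsum} identifies $(C_2(\tau(s(\Sk(A^{2n}_k)))),\xi_2)$ with the Weinstein sum of $(S^{2n+1},\xi_\std)$ with a second copy along this hypersurface, and Construction~\ref{con} inside Proposition~\ref{prop:weinsteinsumcob} produces a Weinstein cobordism from $(S^{2n+1},\xi_\std)\sqcup(S^{2n+1},\xi_\std)$ to $(C_2(\tau(s(\Sk(A^{2n}_k)))),\xi_2)$ built, from the Legendrian handlebody of Figure~\ref{AkFiber} for $A^{2n}_k$, with one Weinstein $1$-handle and $k$ Weinstein $(n+1)$-handles. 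Capping the concave end by two copies of $(D^{2n+2},\la_\std)$ gives a Weinstein filling, and cancelling one of the two $0$-handles against the $1$-handle, exactly as in the final step of the proof of Theorem~\ref{thm:contactsurgery}, leaves a Weinstein filling in which each of the $k$ critical handles is attached along a Legendrian connected sum $s(\La)\#s(\La)$ of two stabilized copies of a zero-section $S^n\subset T^*S^n$ of one of the plumbed factors of $A^{2n}_k$. Since $s(\La)$ is loose and the two summands lie in disjoint Darboux balls by construction, the Legendrian connected sum is loose by the observation at the end of Subsection~\ref{sssec:legsum}. Every critical attaching sphere is therefore loose, and the resulting filling is flexible by \cite[Definition~11.29]{CieliebakEliashberg12}.

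The main obstacle will be the bookkeeping in the flexibility half, specifically justifying the generalization of Theorem~\ref{thm:contactsurgery} to a Weinstein hypersurface whose skeleton is a plumbing of $n$-spheres, and verifying that after cancelling the $0$--$1$ handle pair the remaining $k$ critical $(n+1)$-handles are genuinely attached along Legendrian connected sums of pairs of stabilized Legendrian spheres that sit in disjoint Darboux balls. The latter is what permits the $h$-principle for loose Legendrians to transfer looseness across the connected sum. Since stabilization is a local operation away from the intersection locus of the skeleton, both subtleties are ultimately local on each top-dimensional stratum of $s(\Sk(A^{2n}_k))$ and amount to repeating, stratum by stratum, the Legendrian handle analysis carried out in the proof of Theorem~\ref{thm:contactsurgery}.
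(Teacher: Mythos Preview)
Your proposal is correct and follows essentially the same approach as the paper. The paper's own proof is extremely terse --- it simply asserts that the non-flexibility half ``is proven exactly as Proposition~\ref{prop:Z0}'' using \cite[Theorem~1.2]{KwonVanKoert16} and \cite[Theorem~20]{Zhou}, and that the flexibility half ``follows from Theorem~\ref{thm:contactsurgery} by the same argument as in Proposition~\ref{prop:flexiblefilling}'' --- whereas you spell out what that actually entails: identifying $\ob(A_k^{2n},\tau_{A_k}^2)$ as a Brieskorn link with a Milnor-fibre filling of nonzero symplectic homology, and running Construction~\ref{con} from Proposition~\ref{prop:weinsteinsumcob} on the handle decomposition of $A_k^{2n}$ rather than just $T^*S^n$. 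Your identification of the subtlety (that Theorem~\ref{thm:contactsurgery} is stated only for a single Legendrian sphere and must be re-derived handle-by-handle for the plumbed skeleton, and that looseness must survive in the complement of the other attaching spheres) is exactly the gap the paper leaves implicit. One small remark: your appeal to \cite{Uebele} is unnecessary for the non-flexibility assertion --- nonvanishing of $SH_*$ is already provided by \cite[Theorem~1.2]{KwonVanKoert16}, and the paper invokes Uebele only elsewhere, to separate distinct values of $k$.
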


The first statement in Proposition \ref{prop:Ak} is proven exactly as Proposition \ref{prop:Z0}, since \cite[Theorem~1.2]{KwonVanKoert16} and \cite[Theorem~20]{Zhou} apply to all Weinstein fillings of Brieskorn manifolds. The second statement follows from Theorem~\ref{thm:contactsurgery} by the same argument as in Proposition~\ref{prop:flexiblefilling}. 

\bibliographystyle{plain}
\bibliography{CasalsEtnyre}
\end{document}